\newtheorem{theorem}{Theorem}
\newtheorem{lemma}[theorem]{Lemma}
\theoremstyle{definition}
\newtheorem*{definition}{Definition}
\newtheorem{example}{Example}
\newcommand{\Prob}[1][\Sph]{\mathcal P\left({#1}\right)}  
\newcommand{\Meas}[1][\Sph]{\mathcal M\left(#1\right)}      
\newcommand{\MeasS}[1][\Sph]{\mathcal S\left({#1}\right)}   
\newcommand{\PP}{\mathsf P}								
\newcommand{\R}{\mathbb R}								
\newcommand{\tr}{^\mathsf{T}}							
\newcommand{\ang}[1]{\left\langle #1 \right\rangle}     
\newcommand{\norm}[1]{\left\Vert #1 \right\Vert}        
\newcommand{\genhalf}{\mathcal H^*}
\newcommand{\half}{\mathcal H}
\newcommand{\flag}{\mathcal F}
\newcommand{\maxd}{\alpha^*}	       					
\newcommand{\median}{\mathfrak M}									
\newcommand{\Hauss}{\delta_H}           
\newcommand{\cl}[1]{\mathrm{cl}\left(#1\right)}         
\newcommand{\HD}{hD}	           		
\newcommand{\SHD}{shD}	
\newcommand{\AHD}{ahD}					
\newcommand{\tAHD}{\textbf{ahD}}        
\newcommand{\tHD}{\textbf{hD}}          
\newcommand{\AD}{aD}                    
\newcommand{\Sph}[1][d-1]{\mathbb{S}^{#1}}
\newcommand{\compl}{^\mathsf{c}}        
\newcommand{\Gplus}{\mathcal G}			
\newcommand{\rad}[1]{\mathrm{rad}\left(#1\right)}     
\title[Theoretical properties of angular halfspace depth]{Theoretical properties of angular halfspace depth}
\author{Stanislav Nagy$^{1}$}
\author{Petra Laketa$^{1}$}
\email{nagy@karlin.mff.cuni.cz}
\address{$^1$
	Faculty of Mathematics and Physics,
	Charles University, Prague,
	Czech Republic
}
\date{\today}
\begin{document}

\maketitle

\begin{abstract}
The angular halfspace depth (\tAHD{}) is a natural modification of the celebrated halfspace (or Tukey) depth to the setup of directional data. It allows us to define elements of nonparametric inference, such as the median, the inter-quantile regions, or the rank statistics, for datasets supported in the unit sphere. Despite being introduced in 1987, \tAHD{} has never received ample recognition in the literature, mainly due to the lack of efficient algorithms for its computation. With the recent progress on the computational front, \tAHD{} however exhibits the potential for developing viable nonparametric statistics techniques for directional datasets. In this paper, we thoroughly treat the theoretical properties of \tAHD{}. We show that similarly to the classical halfspace depth for multivariate data, also \tAHD{} satisfies many desirable properties of a statistical depth function. Further, we derive uniform continuity/consistency results for the associated set of directional medians, and the central regions of \tAHD{}, the latter representing a depth-based analogue of the quantiles for directional data.
\end{abstract}

\section{Nonparametrics of directional data and angular halfspace depth}

Directional data analysis concerns the statistics of datasets bound to lie on the unit sphere $\Sph = \left\{ x \in \R^d \colon \norm{x} = 1 \right\}$. Despite sharing similarities with multivariate statistical methods, the particular geometry of the sphere makes the analysis of directional data challenging. The sphere $\Sph$ is a symmetric compact manifold, which makes many statistical methods from $\R^d$ fruitless or suboptimal when applied in $\Sph$ directly \citep{Watson1983, Mardia_Jupp2000, Ley_Verdebout2017}.

We consider the nonparametric analysis for directional data and the concept of depth functions, a statistical tool that introduces elements of nonparametrics to multivariate or non-Euclidean spaces. In the past decades, depths have garnered great success in multivariate analysis \citep{Donoho_Gasko1992, Liu_etal1999, Zuo_Serfling2000, Mosler2013, Chernozhukov_etal2017, Mosler_Mozharovskyi2022}. A prime example of a depth in the Euclidean space $\R^d$ is the \emph{halfspace depth} (\tHD{}, also called \emph{Tukey depth}, or \emph{location depth}) that is defined for $x \in \R^d$ and a Borel probability measure $P$ on $\R^d$ by
    \begin{equation}    \label{eq: halfspace depth}
    \HD(x; P) = \inf \left\{ P\left( H_{x,u} \right) \colon u \in \Sph \right\},     
    \end{equation} 
where $H_{x,u} = \left\{ y \in \R^d \colon \left\langle y, u \right\rangle \geq \left\langle x, u \right\rangle \right\}$ is the closed halfspace whose boundary passes through $x$ with inner unit normal $u$. In words, \tHD{} evaluates the smallest $P$-mass of a halfspace that contains $x$. As such, it allows to devise a $P$-dependent ordering of the points from $\R^d$ --- a point $x$ is said to be located deeper than $y$ inside the mass of $P$ if $\HD(x; P) > \HD(y; P)$. The deepest point in $\R^d$, defined as (any) point that maximizes the depth function $x \mapsto \HD(x; P)$ over $\R^d$, is a natural analogue of the median for $\R^d$-valued measures and is called a \emph{halfspace median} of $P$. A counterpart of quantiles (or, more precisely, inter-quantile regions) in $\R^d$ are the \emph{central regions} of $P$, given as the upper level sets of \tHD{}
    \begin{equation}    \label{eq: central regions}
    \HD_\alpha(P) = \left\{ x \in \R^d \colon \HD(x; P) \geq \alpha \right\} \quad \mbox{for }\alpha \geq 0.   
    \end{equation}  
These sets are known to be nested, convex, and compact for $\alpha > 0$; their shapes capture the geometry of $P$. Both the multivariate medians and central regions are of great importance in nonparametric analysis of multivariate data, and have been studied extensively in the literature \citep{Donoho_Gasko1992, Rousseeuw_Ruts1999, Zuo_Serfling2000, Dyckerhoff2017}. The halfspace depth is only one of many depth functions proposed in multivariate analysis. It is, nevertheless, the classical representative of multivariate depths. 

Several depths suitable specifically for directional data have been proposed in the literature; we refer to \citet{Liu_Singh1992, Agostinelli_Romanazzi2013b, Ley_etal2014, Pandolfo_etal2018b, Buttarazzi_etal2018, Konen2022}, and \citet{Hallin_etal2022}. In this contribution, we scrutinize historically the first directional depth function proposed by \citet[Example~2.3.4]{Small1987}. It is a version of \tHD{} from~\eqref{eq: halfspace depth} suitable for measures in $\Sph$. For $x \in \Sph$ and $P$ a Borel probability measure on $\Sph$, the \emph{angular halfspace depth} (\tAHD{}, also known as \emph{angular Tukey depth}) of $x$ with respect to (w.r.t.) $P$ is defined as
    \begin{equation}	\label{eq: angular halfspace depth}
    \AHD\left(x; P\right) = \inf \left\{ P\left( H_{0,u} \right) \colon u \in \Sph \mbox{ and } x \in H_{0,u} \right\}. 
    \end{equation}
In contrast to the standard \tHD{} in~\eqref{eq: halfspace depth}, in the definition of \tAHD{} one considers only halfspaces whose boundary passes through the origin $0$ in the ambient space $\R^d$, and searches for a minimum $P$-mass among those that contain $x$. 

The first rigorous studies of \tAHD{} were conducted by \citet{Small1987} and \citet[Section~4]{Liu_Singh1992}. Since then, no systematic investigation of the theory of \tAHD{} has been performed. In fact, it might come as a surprise how little attention did \tAHD{} receive in the literature, especially when compared with the abundant body of research on the classical \tHD{}. One explanation for this is the presumed high computational cost of the angular depth, coupled with a lack of efficient algorithms for its computation \citep{Pandolfo_etal2018b}. The problem of exact and approximate computation of \tAHD{} was, however, recently resolved in \citet{Dyckerhoff_Nagy2023}, which paved the way to explore the general theory and practice of \tAHD{} with its statistical applications. 

This paper comprehensively studies the main theoretical properties of \tAHD{}, its associated median, and its central regions. We contrast \tAHD{} with \tHD{} and demonstrate that similarly to the halfspace depth in $\R^d$, also the angular depth satisfies an array of plausible properties required from a proper depth function. After introducing the notations, we begin in Section~\ref{section: projection} by drawing a direct relation between \tAHD{} on the sphere $\Sph$ and a variant of \tHD{} in $\R^{d-1}$. Then, in Section~\ref{section: properties}, we study \tAHD{} w.r.t. the desirable properties of a directional depth formulated recently in \citet{Nagy_etal2023}. We show that \tAHD{}, as the only directional depth function found in the literature, satisfies all the conditions from \citet{Nagy_etal2023}. In the final Section~\ref{section: additional properties} we provide a list of additional characteristics of \tAHD{}, mainly related to the continuity of its median and the associated central regions. Those findings are important in statistical practice, as they guarantee the uniform consistency of the sample depth~\eqref{eq: angular halfspace depth} when computed w.r.t. the empirical measure of a random sample $X_1, \dots, X_n$ from distribution $P$. The proofs of all theoretical results are gathered in the Appendix.

\subsection*{Notations}

We write $e_j \in \Sph$ for the $j$-th canonical vector in $\R^d$, $j=1,\dots,d$. That is, $e_1 = (1, 0, \dots, 0)\tr$ etc. We denote by $\half = \left\{ H_{x, u} \colon x \in \R^d \mbox{ and }u \in \Sph \right\}$ the collection of all halfspaces in $\R^d$, and by $\half_0 = \left\{ H_{0,u} \in \half \colon u \in \Sph \right\}$ those halfspaces in $\R^d$ whose boundary passes through the origin. For a set $A \subset \R^d$, we write $A^\circ$ for the interior of $A$ and $\partial A$ for the topological boundary of $A$. Sometimes we use $A^\circ$ ($\partial A$) to denote the relative interior (boundary) of $A$, that is the interior (boundary) considered in the smallest affine subspace containing $A$. Whether we consider relative interior (boundary) or not will always be clear from the context. The complement of $A$ is $A\compl$. A set $A \subseteq \Sph$ is called \emph{spherical convex} \citep[see, e.g.,][]{Besau_Werner2016} if its radial extension defined as $\rad{A} = \left\{ \lambda \, a \in \R^d \colon a \in A \mbox{ and }\lambda > 0\right\}$ is convex in $\R^d$. We denote by $\Sph_+ = \left\{ y \in \Sph \colon \ang{y, e_d} > 0 \right\}$ and $\Sph_- = \left\{ y \in \Sph \colon \ang{y, e_d} < 0 \right\}$ the (open) ``northern'' hemisphere with a pole at $e_d$, and the (open) ``southern'' hemisphere centered at $-e_d$, respectively. The set $\Sph_0 = \left\{ y \in \Sph \colon \ang{y, e_d} = 0 \right\}$ is called the ``equator'' of the $(d-1)$-sphere $\Sph$.

Let $\left(\Omega, \mathcal A, \PP\right)$ be the probability space on which all random variables are defined. For a topological space $\mathcal X$, $\Prob[\mathcal X]$ stands for the collection of all Borel probability measures on $\mathcal X$, and $X \sim P \in \Prob[\mathcal X]$ means that $X$ is a random variable in $\mathcal X$ with distribution $P$. For $\varphi \colon \mathcal X \to \mathcal Y$ a map between topological spaces, we write $P_{\varphi(X)} \in \Prob[\mathcal Y]$ for the distribution of $\varphi(X)$ with $X \sim P \in \Prob[\mathcal X]$. Further, $\Meas[\mathcal X]$ represents the collection of all finite Borel measures on $\mathcal X$. Certainly, $\Prob[\mathcal X] \subset \Meas[\mathcal X]$. Weak convergence of a sequence of measures $\left\{ P_{n} \right\}_{n=1}^\infty \subset \Meas[\mathcal X]$ to $P \in \Meas[\mathcal X]$ is denoted by $P_n \xrightarrow{w} P$ as $n \to \infty$. Finally, we say that $Q$ is a (finite Borel) \emph{signed measure} on a topological space $\mathcal X$ if there exist two finite Borel measures $Q_+, Q_- \in \Meas[\mathcal X]$ such that \citep[Theorem~5.6.1]{Dudley2002}
    \begin{equation}    \label{eq: Hahn decomposition}
    Q(B) = Q_+(B) - Q_-(B)  \quad\mbox{for all $B \subseteq \mathcal X$ Borel}.  
    \end{equation}
A signed measure can attain both positive and negative values. We denote by $\MeasS[\mathcal X]$ the set of all finite Borel signed measures on $\mathcal X$, and note that $\Meas[\mathcal X] \subset \MeasS[\mathcal X]$.

%
%
%
%

\section{Gnomonic projection and \texorpdfstring{$\AHD$}{ahD}}  \label{section: projection}

We begin our study by drawing connections of \tAHD{} in $\Sph$ with the standard (Euclidean) \tHD{} in $\R^{d-1}$. They will allow us to use the abundance of theoretical results available for \tHD{} and adapt them to the setup of directional measures. First, note that the halfspace depth~\eqref{eq: halfspace depth} is well defined not only for probability measures but for any (finite) Borel measures $P \in \Meas[\R^{d}]$. It can be written in two equivalent forms, either as
    \begin{equation}	\label{eq: halfspace depth first expression}
    \HD\left(x;P\right) = \inf \left\{ P\left(H \right) \colon H \in \half \mbox{ and } x \in H \right\},
    \end{equation}
or also as
    \begin{equation} \label{eq: halfspace depth second expression}
    \HD\left(x;P\right) = \inf \left\{ P\left(H \right) \colon H \in \mathcal H \mbox{ and } x \in \partial H \right\}.    
    \end{equation}
In what follows, we employ the halfspace depth with signed measures. In case when $Q \in \MeasS[\R^d]$ can attain negative values, the two definitions of \tHD{} in~\eqref{eq: halfspace depth first expression} and~\eqref{eq: halfspace depth second expression} differ; a halfspace $H$ containing $x$ in its interior may have strictly smaller $Q$-mass than its subset $H'$ with $x \in \partial H'$. Out of the two possibilities of defining \tHD{} for signed measures, it will be convenient to use the first one in~\eqref{eq: halfspace depth first expression}. We say that the \emph{halfspace depth} of $x \in \R^d$ w.r.t. $Q \in \MeasS[\R^d]$ is defined as
    \begin{equation}    \label{eq: signed halfspace depth}
    \HD\left(x; Q\right) = \inf \left\{ Q\left(H \right) \colon H \in \half \mbox{ and } x \in H \right\}.    
    \end{equation}
We use the same notation for both halfspace depth of measures from $\Meas[\R^d]$ and signed measures from $\MeasS[\R^d]$; the standard depth~\eqref{eq: halfspace depth first expression} is only a particular case of our more general depth~\eqref{eq: signed halfspace depth}. 

Our task is to express \tAHD{} of $x \in \Sph$ w.r.t. $P \in \Prob$.\footnote{We could equally work with $P \in \Meas$ without having to restrict to probability measures. All our results also work for $P \in \Meas$, with obvious minor modifications.} Throughout this section, we make the following assumption on $P$ and $x$
    \begin{equation}    \label{eq: zero equator}
    P(\Sph_0) = 0 \quad \mbox{and}\quad x \in \Sph_+.
    \end{equation}
This assumption is made without loss of generality due to the rotational invariance of \tAHD{} that will be formally proved as Theorem~\ref{theorem: rotational} in Section~\ref{section: rotational}.\footnote{Indeed, if \eqref{eq: zero equator} is not valid for $Q \in \Prob$ and we want to compute $\AHD(z;Q)$ for $z \in \Sph$, it is always possible to find a direction $u \in \Sph$ such that $Q(\left\{y \in \Sph \colon \ang{y, u} = 0 \right\}) = 0$ and $\ang{y,z} > 0$. Applying (any) orthogonal rotation $O \in \R^{d \times d}$ such that $O\, e_d = u$ to $X \sim Q$, we obtain $P \in \Prob$, with $P$ the measure corresponding to the random vector $O X$. Then, \eqref{eq: zero equator} is true, and $x = O z \in \Sph_+$. By Theorem~\ref{theorem: rotational} we have $\AHD(x; P) = \AHD(z; Q)$, while the conditions~\eqref{eq: zero equator} are now valid.} Denote by $\Gplus = \left\{ y \in \R^d \colon \ang{y,e_d} = 1 \right\}$ the hyperplane tangent to $\Sph$ at the pole $e_d \in \R^d$, see Figure~\ref{figure: mapping}. Consider the mapping 
    \begin{equation}    \label{eq: gnomonic projection}
    \xi \colon \Sph \setminus \Sph_0 \to \Gplus \colon x \mapsto x/\ang{x,e_d}    
    \end{equation} 
that takes points $x \in \Sph$ not on the equator to the unique point of $\Gplus$ on the straight line between $x$ and the origin. This map is visualized for $d=2$ and $d=3$ in Figure~\ref{figure: mapping}. In what follows, we will canonically identify the hyperplane $\Gplus$ with $\R^{d-1}$ by formally dropping the last coordinate $\ang{y, e_d} = 1$ of points $y \in \Gplus$. This allows us to write also $\xi(x) \in \R^{d-1}$ for any $x \in \Sph \setminus \Sph_0$. The map~\eqref{eq: gnomonic projection} is called the \emph{gnomonic projection} of the sphere $\Sph$ into $\Gplus$ (or $\R^{d-1}$, see \citealp{Besau_Werner2016}). It is a double covering of $\R^{d-1}$, once for points from $\Sph_+$ and once for those from $\Sph_-$. On $\Sph_+$ (or $\Sph_-$) it is a bijection with $\R^{d-1}$. The points from the equator $\Sph_0$ are not considered in~\eqref{eq: gnomonic projection}. This will not be a problem because of our assumption~\eqref{eq: zero equator}. 

\begin{figure}[htpb]
\includegraphics[width=0.38\textwidth]{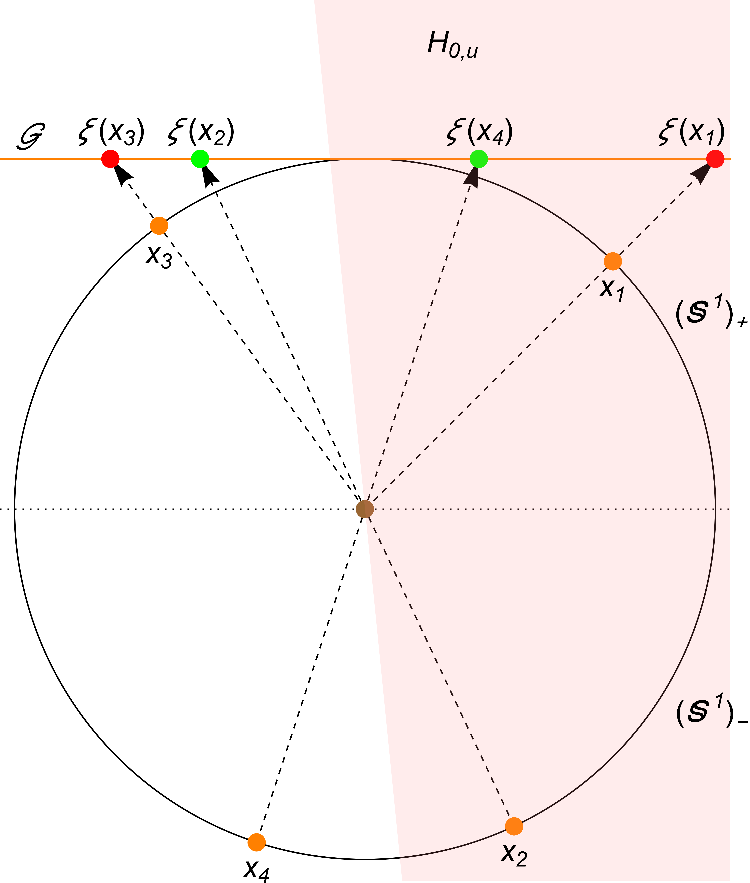} \qquad \includegraphics[width=0.53\textwidth]{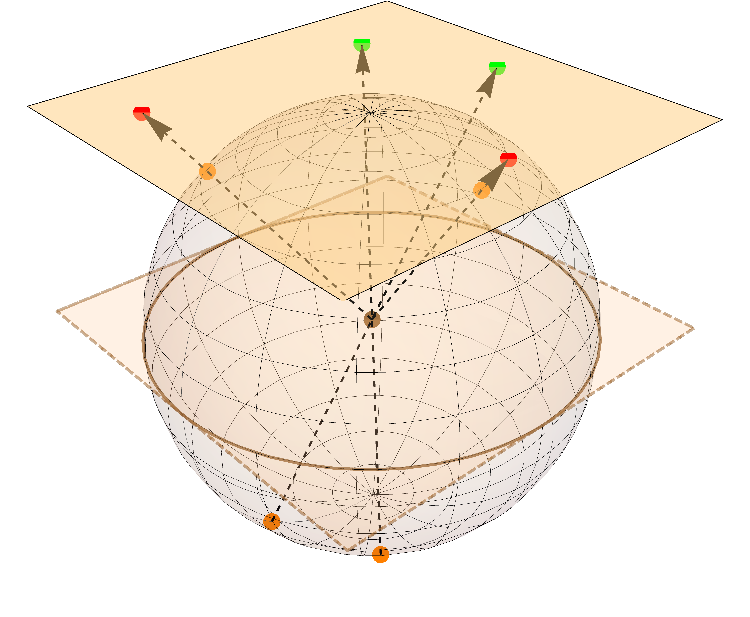}
\caption{Transformation $\xi$ that takes $\Sph\setminus\Sph_0$ to $\Gplus$ for $d=2$ (left panel) and $d=3$ (right panel). The data from the northern hemisphere $\Sph_+$ are mapped into points of positive $P_\pm$-mass in $\Gplus$ (red points), while data from the southern hemisphere $\Sph_-$ project to points of negative $P_\pm$-mass (green points). In the left panel we see also a halfspace $H_{0,u} \in \half_0$ (shaded halfplane). Only points $x_1 \in \Sph[1]_+$ and $x_2 \in \Sph[1]_-$ lie in $H_{0,u}$, which means that precisely $\xi(x_1)$ and $\xi(x_4)$ are contained in $G_u = H_{0,u} \cap \Gplus$. This is in accordance with formula~\eqref{eq: pm}.}
\label{figure: mapping}
\end{figure}

Denote for any $B \subseteq \Gplus$ (or $B \subseteq \R^{d-1}$) Borel
    \begin{equation*}    
    \begin{aligned}
    P_+(B) & = P\left( \left\{ y \in \Sph_+ \colon \xi(y) \in B \right\} \right), & & P_-(B) & = P\left( \left\{ y \in \Sph_- \colon \xi(y) \in B \right\} \right),
    \end{aligned}
    \end{equation*}
the pushforward measures of restrictions of $P$ to $\Sph_+$ and $\Sph_-$, respectively, under the gnomonic projection $\xi$. In words, $P_+$ is the image measure of the part of $P$ in the northern hemisphere when projected to $\Gplus$ (or $\R^{d-1}$), and $P_-$ analogously for the southern hemisphere. We patch $P_+$ and $P_-$ together and use them to define a signed measure $P_\pm \in \MeasS[\R^{d-1}]$ given by~\eqref{eq: Hahn decomposition} 
    \begin{equation}    \label{eq: gnomonic measure}
    P_{\pm}(B) = P\left( \left\{ x \in \Sph_+ \colon \xi(x) \in B \right\} \right) - P\left( \left\{ x \in \Sph_- \colon \xi(x) \in B \right\} \right) \quad \mbox{for $B \subseteq \R^{d-1}$ Borel}.	
    \end{equation}
The signed measure $P_\pm$ takes a simple form when $P$ is an empirical measure of $n$ data points $x_1, \dots, x_n \in \Sph$. In that situation, $P_{\pm}$ is simply a signed empirical measure, with atoms of positive mass $1/n$ at each $\xi(x_i)$ such that $x_i \in \Sph_+$, and atoms of negative mass $-1/n$ at $\xi(x_i)$ for $x_i \in \Sph_-$. In this situation, the gnomonic projection was essential to design fast computational algorithms for the sample \tAHD{} in \citet{Dyckerhoff_Nagy2023}.

Our principal tool for analyzing \tAHD{} is its relation with \tHD{}, which is described in the following theorem. In that result, we say that a set $H$ is called a \emph{generalized halfspace} in $\R^{d-1}$ if $H$ is a halfspace in $\R^{d-1}$, an empty set, or the whole space $\R^{d-1}$ itself. 

\begin{theorem}	\label{theorem: relation}
Let $P \in \Prob$ and $x \in \Sph$ be such that~\eqref{eq: zero equator} is true. Then
    \begin{equation}	\label{eq: general AHD formula}
    \AHD(x;P) = P\left(\Sph_-\right) + \inf \left\{ P_+\left( H \right) - P_-\left( H^\circ \right) \colon H \in \mathcal H^* \mbox{ and } \xi(x) \in H \right\}	
    \end{equation}
for $\genhalf$ the set of all generalized halfspaces in $\R^{d-1}$. If, in addition, 
    \begin{equation}    
    P\left( \partial H \cap \Sph_- \right) = 0 \quad \mbox{for all }H \in \half_0,	\label{eq: negative smoothness}  
    \end{equation}
then 
    \[  \AHD(x;P) = P(\Sph_-) + \HD(\xi(x); P_{\pm}).   \]
\end{theorem}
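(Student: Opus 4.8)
The plan is to reduce everything to the Euclidean setting in $\R^{d-1}$ by pushing the origin-halfspaces $H_{0,u} \in \half_0$ through the gnomonic projection $\xi$, treating the two hemispheres $\Sph_+$ and $\Sph_-$ separately. Writing $u = (u', u_d) \in \R^{d-1}\times\R$ and identifying $\Gplus$ with $\R^{d-1}$, the first step is to record the elementary dictionary $u \leftrightarrow G_u$, where $G_u := H_{0,u}\cap\Gplus = \{z\in\R^{d-1}\colon \ang{z,u'}+u_d\ge 0\}$. When $u'\ne 0$ this is a genuine halfspace, and every genuine halfspace of $\R^{d-1}$ arises this way after normalization; when $u = e_d$ one gets $G_u = \R^{d-1}$, and when $u = -e_d$ one gets $G_u = \emptyset$. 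Hence $u\mapsto G_u$ maps $\Sph$ onto $\genhalf$. Moreover, since $\ang{x,e_d}>0$, dividing $\ang{x,u}\ge 0$ by $\ang{x,e_d}$ shows that $x\in H_{0,u}\iff\xi(x)\in G_u$, so the feasible directions in~\eqref{eq: angular halfspace depth} correspond exactly to the generalized halfspaces $H\in\genhalf$ with $\xi(x)\in H$.

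Next I would compute the $P$-mass of $H_{0,u}$ hemisphere by hemisphere. Because $P(\Sph_0)=0$, we have $P(H_{0,u}) = P(H_{0,u}\cap\Sph_+) + P(H_{0,u}\cap\Sph_-)$. On $\Sph_+$, where $\ang{y,e_d}>0$, the same division-by-a-positive-scalar argument gives $y\in H_{0,u}\iff\xi(y)\in G_u$, so $P(H_{0,u}\cap\Sph_+) = P_+(G_u)$. On $\Sph_-$, where $\ang{y,e_d}<0$, dividing by $\ang{y,e_d}$ \emph{reverses} the inequality, giving $y\in H_{0,u}\iff \ang{\xi(y),u'}+u_d\le 0 \iff \xi(y)\in(G_u^\circ)\compl$; hence $P(H_{0,u}\cap\Sph_-) = P_-\big((G_u^\circ)\compl\big) = P(\Sph_-) - P_-(G_u^\circ)$, where I used $P_-(\R^{d-1}) = P(\Sph_-)$. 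Summing, taking the infimum over the feasible $u$, and translating to generalized halfspaces via the first paragraph yields precisely~\eqref{eq: general AHD formula}; the degenerate directions $u=\pm e_d$ need a quick separate check but fit the formula.

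For the second identity I would translate the extra hypothesis~\eqref{eq: negative smoothness}. Under the same dictionary, $\partial H_{0,u}\cap\Sph_-$ is carried by $\xi$ onto the bounding hyperplane of $G_u$ (and onto $\emptyset$ for $u=\pm e_d$), so $P(\partial H_{0,u}\cap\Sph_-) = P_-(\partial G_u)$; thus~\eqref{eq: negative smoothness} is equivalent to $P_-$ assigning zero mass to every hyperplane of $\R^{d-1}$, which gives $P_-(H^\circ) = P_-(H)$ for all $H\in\genhalf$. Plugging this into~\eqref{eq: general AHD formula}, the infimand collapses to $P_+(H) - P_-(H) = P_\pm(H)$. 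It then remains only to discard the ``whole space'' option and recover $\HD(\xi(x);P_\pm)$ from~\eqref{eq: signed halfspace depth}: choosing genuine halfspaces $H_n\uparrow\R^{d-1}$ all containing $\xi(x)$ and using continuity of the finite signed measure $P_\pm$ along increasing sequences, $P_\pm(H_n)\to P_\pm(\R^{d-1})$, so $H=\R^{d-1}$ never lowers the infimum, and the infimum over $\genhalf$ coincides with the one over ordinary halfspaces.

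The genuinely delicate point is the sign reversal on $\Sph_-$ in the second paragraph: it is exactly what forces the \emph{open} halfspace $G_u^\circ$, rather than $G_u$ itself, to appear in front of $P_-$ in~\eqref{eq: general AHD formula}, and therefore why the smoothness condition~\eqref{eq: negative smoothness} is needed before $P_\pm$ and the signed halfspace depth can be invoked. The rest --- the behaviour of the degenerate directions $u=\pm e_d$, measurability of the sets $\{y\in\Sph_\pm\colon\xi(y)\in H\}$, and the $\genhalf$-to-$\half$ reduction --- is routine bookkeeping.
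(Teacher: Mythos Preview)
Your proposal is correct and follows essentially the same route as the paper: the same dictionary $u\leftrightarrow G_u$, the same hemisphere-by-hemisphere accounting (your $(G_u^\circ)\compl$ is exactly the paper's $G_{-u}$), and the same use of~\eqref{eq: negative smoothness} to collapse $P_-(H^\circ)$ to $P_-(H)$. Your final paragraph even goes slightly beyond the paper's proof, which tacitly identifies the infimum over $\genhalf$ with $\HD(\xi(x);P_\pm)$ without checking that the option $H=\R^{d-1}$ can be discarded; your continuity-from-below argument fills that small gap cleanly.
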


Formula~\eqref{eq: general AHD formula} will be quite useful in deriving theoretical properties of \tAHD{} and constructing examples, as we will see throughout this paper.

\section{Desiderata for a directional depth function}   \label{section: properties}

There are several well-established properties that a depth $D$ in a linear space $\R^d$ should obey. It is typically agreed that $D$ should be \begin{itemize*} \item affine invariant, meaning that it does not depend on the coordinate system in $\R^d$; \item maximized at the center of symmetry of $P$ for any $P \in \Prob[\R^d]$ symmetric; \item monotonically decreasing as $x$ moves along straight lines starting at the point of maximum of $D$; and \item decaying uniformly to zero as $\norm{x} \to \infty$. \end{itemize*} This set of properties was postulated by \citet{Zuo_Serfling2000}; for additional related axioms we refer to \citet{Liu1990, Serfling2006, Mosler2013}, and \citet{Mosler_Mozharovskyi2022}. 

Compared to depths in $\R^d$, much less is known about general depth functions defined in $\Sph$. One set of assumptions has been laid down recently in \citet{Nagy_etal2023}. There, a general \emph{angular} (or \emph{directional}) \emph{depth function} $\AD$ was introduced as a bounded map $\AD \colon \Sph \times \Prob[\Sph] \to [0,\infty)$ that fulfills (most of) the following properties for all $P \in \Prob[\Sph]$:
    \begin{enumerate}[label=\upshape{($\mathrm{D}_{\arabic*}$)}, ref=\upshape{($\mathrm{D}_{\arabic*}$)}]
    \item \label{cond: rotational} \emph{Rotational invariance:} $\AD(x;P)=\AD(O x; P_{OX})$ for all $x \in \Sph$ and any orthogonal matrix~$O \in \R^{d \times d}$, where $P_{OX} \in \Prob[\Sph]$ is the distribution of the transformed random vector $OX$ with $X \sim P$;
    \item \label{cond: spherical maximality} \emph{Maximality at center:} For any $P \in \Prob[\Sph]$ rotationally symmetric around the axis given by $\mu \in \Sph$ we have 
        \begin{equation}\label{eq: double maximality sphere}
        \max\{ \AD(\mu;P), \AD(-\mu;P) \} = \sup_{x \in \Sph} \AD(x;P);    
        \end{equation}
    \item \label{cond: spherical monotonicity} 
    \emph{Monotonicity along great circles:} 
        \begin{equation*}
        \AD(x; P) \leq \AD(\left(\mu + \alpha(x - \mu)\right)/\norm{ \mu + \alpha(x-\mu)};P)
        \end{equation*}
    for all $x \in \Sph \setminus\left\{-\mu\right\}$ and $\alpha \in [0,1]$, where $\mu \in \Sph$ is any point that satisfies
        \begin{equation}    \label{eq: maximality sphere}
        \AD(\mu;P) = \sup_{x \in \Sph} \AD(x;P); 
        \end{equation}
    \item \label{cond: minimality} \emph{Minimality at the anti-median:} $\AD(-\mu;P)=\inf_{x \in \Sph} \AD(x;P)$, for any $\mu \in \Sph$ that satisfies~\eqref{eq: maximality sphere}.
    \item \label{cond: spherical usc} 
    \emph{Upper semi-continuity:} $\AD(\cdot;P) \colon \Sph \to [0,\infty) \colon x \mapsto \AD(x;P)$ is upper semi-continuous, meaning that
        \[  {\lim\sup}_{x_n \to x} \AD(x_n;P) \leq \AD(x;P) \quad \mbox{for all }x \in \Sph \]
    where the sequence $\left\{x_n\right\}_{n=1}^\infty$ is taken in $\Sph$.
    \item \label{cond: spherical convexity} \emph{Quasi-concavity:} All central regions
        \begin{equation}    \label{eq:central region}
        \AD_\alpha(P) = \left\{ x \in \Sph \colon \AD(x;P) \geq \alpha \right\} \quad \mbox{with $\alpha \geq 0$}  
        \end{equation}
    are spherical convex sets.
    \item \label{cond: nonrigidity} \emph{Non-rigidity of central regions:} There exists a measure $P \in \Prob[\Sph]$ such that for some $\alpha > 0$ the central region from~\eqref{eq:central region} of $\AD$ is not a spherical cap.
    \end{enumerate}
Conditions~\ref{cond: rotational}--\ref{cond: minimality} are direct translations of the classical requirements~\textbf{P1}--\textbf{P4} postulated for the (Euclidean) statistical depth function in $\R^d$ in \citet{Zuo_Serfling2000}. Analogues of the additional conditions~\ref{cond: spherical usc} and~\ref{cond: spherical convexity} have been introduced in the analysis of the depth in $\R^d$ by \citet{Serfling2006}. The final condition~\ref{cond: nonrigidity} appears in \citet{Nagy_etal2023} for the first time. It is a minimal requirement on an angular depth function that guarantees that $\AD$ reflects the shape properties of the distribution $P \in \Prob[\Sph]$. 

Condition~\ref{cond: spherical maximality} operates with the notion of rotational symmetry \citep[Section~2.3.2]{Ley_Verdebout2017} of $P$. Recall that a distribution $P \in \Prob[\Sph]$ is said to be \emph{rotationally symmetric} around a direction $\mu \in \Sph$ if $X \sim P$ has the same distribution as $O X$ for any orthogonal matrix $O \in \R^{d \times d}$ that fixes $\mu$, that is $O \mu = \mu$. The center of rotational symmetry of $P$ is never unique; if $\mu \in \Sph$ is a center of rotational symmetry of $P$, then so is $-\mu \in \Sph$. Thus, the maximum on the left-hand side of~\eqref{eq: double maximality sphere} is necessary to be considered in~condition~\ref{cond: spherical maximality}.

The set of conditions~\ref{cond: rotational}--\ref{cond: nonrigidity} is not independent; clearly,~\ref{cond: spherical convexity} is a stronger version of~\ref{cond: spherical monotonicity}. We list both these requirements since in \citet{Nagy_etal2023}, it was argued that the quasi-concavity condition~\ref{cond: spherical convexity} takes a quite different meaning on the unit sphere than it does in the classical case of $\R^d$. In particular, in \citet[Theorem~3]{Nagy_etal2023} it is proved that~\ref{cond: spherical convexity} implies that $\AD$ must be constant on an open hemisphere in $\Sph$. Thus, for directional data, the requirement~\ref{cond: spherical convexity} of convexity of central regions is questionable, and the strictly weaker~\ref{cond: spherical monotonicity} may be preferable for some depths. We will, however, see that just like the (Euclidean) \tHD{} in $\R^d$, also \tAHD{} in $\Sph$ satisfies the stronger condition~\ref{cond: spherical convexity} with all its implications.

In the following subsections, we deal with conditions~\ref{cond: rotational}--\ref{cond: nonrigidity} one by one, but not necessarily in this order. We establish that each of our conditions is verified by \tAHD{}.

\subsection{Rotational invariance}  \label{section: rotational}

The validity of~\ref{cond: rotational} for $\AHD$ was proved already in \citet[Example~4.4.4]{Small1987}. It follows directly from the definition of $\AHD$; for completeness, we provide a proof.

\begin{theorem} \label{theorem: rotational}
The depth \tAHD{} satisfies condition~\ref{cond: rotational}.
\end{theorem}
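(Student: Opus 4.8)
The plan is simply to unwind the definition~\eqref{eq: angular halfspace depth} of \tAHD{} and to use that any orthogonal matrix $O \in \R^{d \times d}$ maps $\Sph$ bijectively onto itself and maps the collection $\half_0$ of halfspaces through the origin bijectively onto itself. Fix $P \in \Prob[\Sph]$, $x \in \Sph$, and an orthogonal $O$; write $P_{OX} \in \Prob[\Sph]$ for the distribution of $OX$ with $X \sim P$, which is a well-defined Borel probability measure since the restriction of $O$ to $\Sph$ is a homeomorphism, its inverse being the restriction of $O\tr = O^{-1}$.

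First I would record the elementary identity $O\tr H_{0,v} = H_{0,O\tr v}$ for every $v \in \Sph$, which is immediate from $H_{0,v} = \left\{ y \in \R^d \colon \ang{y,v} \ge 0 \right\}$ and $\ang{O\tr y, O\tr v} = \ang{y,v}$; the same computation gives $Ox \in H_{0,v} \iff x \in H_{0,O\tr v}$. Consequently, for any $v \in \Sph$ with $Ox \in H_{0,v}$,
\[ P_{OX}\left(H_{0,v}\right) = \PP\bigl( X \in O\tr H_{0,v}\bigr) = P\bigl(H_{0,O\tr v}\bigr), \qquad \text{and } x \in H_{0,O\tr v}. \]

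It then remains to substitute $u = O\tr v$ in the infimum defining $\AHD(Ox;P_{OX})$: as $v$ runs over $\Sph$ so does $u = O\tr v$, and the constraint $Ox \in H_{0,v}$ becomes $x \in H_{0,u}$, so that the set of values $\left\{ P_{OX}(H_{0,v}) \colon v \in \Sph,\ Ox \in H_{0,v}\right\}$ coincides with $\left\{ P(H_{0,u}) \colon u \in \Sph,\ x \in H_{0,u}\right\}$; taking infima on both sides yields $\AHD(Ox;P_{OX}) = \AHD(x;P)$, which is condition~\ref{cond: rotational}. There is no real obstacle here: the only two points requiring a line of justification are the identity $O\tr H_{0,v} = H_{0,O\tr v}$ and the legitimacy of the change of variables in the infimum, both of which rest on $O\tr = O^{-1}$ preserving the Euclidean inner product and the sphere. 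The argument is the spherical analogue, restricted to orthogonal maps and halfspaces through $0$, of the classical proof that the Euclidean \tHD{} is affine invariant.
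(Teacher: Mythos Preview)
Your proof is correct and follows essentially the same approach as the paper's own argument: both exploit that $O$ acts bijectively on $\Sph$ and on $\half_0$, reparametrize the infimum via $u = O\tr v$ (equivalently $v = O u$), and use the inner-product identity $\ang{Ox,v} = \ang{x,O\tr v}$ to match the constraints and the $P$-masses. The only cosmetic difference is that you state the identity $O\tr H_{0,v} = H_{0,O\tr v}$ explicitly, whereas the paper carries out the equivalent inner-product computation inline.
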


\begin{proof}
Take $x \in \Sph$,  and $O\in \R^{d \times d}$ orthogonal. To find $\AHD(O x; P_{O X})$, we need to search through all halfspaces $H_{0,v}$ with $v \in \Sph$ such that $O x \in H_{0,v}$. Since $u \mapsto O u$ is a bijection of $\Sph$, we can equivalently write $v = O u$ and search over all $u \in \Sph$. The condition $O x \in H_{0, v} = H_{0, O u}$ then translates to
   \begin{equation} \label{eq: equivalent angles}
   0 \leq \ang{O x, O u} = (O x)\tr (O u) = x\tr O\tr O u = x\tr u = \ang{ x, u},    
   \end{equation}
i.e., it is equivalent with $x \in H_{0,u}$. Further, using~\eqref{eq: equivalent angles} again we can write 
    \[  
    P_{O X}(H_{0, O u}) = P\left( \left\{ y \in \Sph \colon O y \in H_{0, Ou} \right\} \right) = P\left( \left\{ y \in \Sph \colon y \in H_{0, u} \right\} \right) = P(H_{0,u}),
    \]
meaning that in both $\AHD(x;P)$ and $\AHD(O x; P_{O X})$ one considers the same collection of halfspaces. Necessarily,~\ref{cond: rotational} is true.
\end{proof}

The result of \citet[Example~4.4.4]{Small1987} is, actually, stronger than~\ref{cond: rotational}. It says that for any non-singular matrix $A \in \R^{d\times d}$ and
    \begin{equation}    \label{eq: general rotation}
    \varphi_A \colon \Sph \to \Sph \colon x \mapsto \frac{Ax}{\norm{ A x }}    
    \end{equation}
the invariance $\AHD(x;P) = \AHD(\varphi_A(x); P_{\varphi_A(X)})$ holds true for all $x \in \Sph$ and $X \sim P \in \Prob$. The map~\eqref{eq: general rotation} is a full-dimensional linear transform $x \mapsto A x$ in $\R^d$, followed by a projection back to $\Sph$. It is more general than the rotations considered in~\ref{cond: rotational}; for $O \in \R^{d \times d}$ orthogonal, we obtain $\varphi_O(x) = O x$ by~\eqref{eq: equivalent angles} and we recover~\ref{cond: rotational}. Unlike in~\ref{cond: rotational}, maps~\eqref{eq: general rotation} also allow ``stretching'' the sphere $\Sph$ to an ellipsoid $A \, \Sph = \left\{ A \, x \colon x \in \Sph \right\}$ before mapping it back to itself. 

\subsection{(Semi-)Continuity with consequences}

We now derive~\ref{cond: spherical usc} for \tAHD{}, but in doing so, we prove more: the depth $\AHD(x; P)$ is upper semi-continuous as a function of both arguments $x \in \Sph$ and $P \in \Prob$. For that result, we need to endow $\Prob$ with a topology; a natural one is the topology of weak convergence of measures.

\begin{theorem}	\label{theorem:angular depth continuity}
The angular halfspace depth \eqref{eq: angular halfspace depth} is upper semi-continuous as a function of $\left(x,P\right) \in \Sph \times \Prob[\Sph]$, meaning that
    \[	{\lim\sup}_{n \to \infty} \AHD\left(x_n; P_n\right) \leq \AHD\left(x;P\right)	\]
whenever $x_n \to x$ in $\Sph$, and $P_{n} \xrightarrow{w} P$ in $\Prob[\Sph]$ as $n \to \infty$. In particular, \tAHD{} satisfies condition~\ref{cond: spherical usc}. If, in addition, $P$ is smooth in the sense that 
    \begin{equation}	\label{smoothness of P} \tag{\textsf{S}}
    P\left( \partial H \right) = 0 \mbox{ for all }H \in \half_0,	
    \end{equation}
then \tAHD{} is continuous in both arguments, that is
    \[	\lim_{n \to \infty} \AHD\left(x_n; P_n \right) = \AHD\left(x;P\right)	\]
for $x_n \to x$ in $\Sph$, and $P_{n} \xrightarrow{w} P$ in $\Prob[\Sph]$ as $n \to \infty$. 
\end{theorem}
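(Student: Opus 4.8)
The plan is to prove the semi-continuity statement first and then upgrade it to continuity under the smoothness hypothesis \eqref{smoothness of P}, working directly from the definition \eqref{eq: angular halfspace depth} rather than through the gnomonic projection (the projection picture of Theorem~\ref{theorem: relation} needs the assumption \eqref{eq: zero equator}, which one cannot guarantee simultaneously along a sequence $P_n \xrightarrow{w} P$, so it is cleaner to argue on the sphere). The key observation for upper semi-continuity is that for fixed $u \in \Sph$, the map $P \mapsto P(H_{0,u})$ is \emph{upper} semi-continuous in the weak topology because $H_{0,u}$ is closed (by the portmanteau theorem, $\limsup_n P_n(F) \le P(F)$ for closed $F$), and jointly in $(x,P)$ the constraint set $\{u : x \in H_{0,u}\} = \{u : \ang{x,u}\ge 0\}$ varies upper-hemicontinuously with $x$. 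Concretely: fix $\varepsilon > 0$ and pick $u_0 \in \Sph$ with $x \in H_{0,u_0}$ and $P(H_{0,u_0}) < \AHD(x;P) + \varepsilon$. If $\ang{x,u_0} > 0$ strictly, then $x_n \in H_{0,u_0}$ for $n$ large, so $\AHD(x_n;P_n) \le P_n(H_{0,u_0})$ and $\limsup_n P_n(H_{0,u_0}) \le P(H_{0,u_0}) < \AHD(x;P)+\varepsilon$; letting $\varepsilon \downarrow 0$ gives the claim. The one technical point is the boundary case $\ang{x,u_0}=0$: then $x_n$ need not lie in $H_{0,u_0}$, but one can perturb $u_0$ slightly to $u_0'$ with $\ang{x,u_0'}>0$ while keeping $P(H_{0,u_0'})$ close to $P(H_{0,u_0})$ — here one uses that $P(H_{0,\cdot})$, although not continuous in $u$ in general, is upper semi-continuous in $u$ (again because $H_{0,u}$ depends on $u$ in a way that the set shrinks under small inward tilts up to a $P$-null boundary that can be absorbed into $\varepsilon$), so the infimum \eqref{eq: angular halfspace depth} is unchanged if one restricts to $u$ with $\ang{x,u}>0$. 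This last density/approximation lemma is the main obstacle and deserves a careful separate statement.

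For the continuity part under \eqref{smoothness of P} I would establish the matching lower bound $\liminf_n \AHD(x_n;P_n) \ge \AHD(x;P)$. Pick a subsequence (not relabelled) along which $\AHD(x_n;P_n) \to \liminf$; for each $n$ choose (or $\varepsilon$-nearly choose) a minimizing direction $u_n$ with $x_n \in H_{0,u_n}$ and $P_n(H_{0,u_n}) \le \AHD(x_n;P_n) + 1/n$. By compactness of $\Sph$, pass to a further subsequence with $u_n \to u_* \in \Sph$; then $\ang{x,u_*} = \lim \ang{x_n,u_n} \ge 0$, so $x \in H_{0,u_*}$ and hence $\AHD(x;P) \le P(H_{0,u_*})$. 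It remains to show $P_n(H_{0,u_n}) \to P(H_{0,u_*})$. This is where smoothness enters: the sets $H_{0,u_n}$ converge to $H_{0,u_*}$ in the sense that $\mathbf 1_{H_{0,u_n}} \to \mathbf 1_{H_{0,u_*}}$ pointwise off the boundary $\partial H_{0,u_*}$ (for $y$ with $\ang{y,u_*}\neq 0$ the sign of $\ang{y,u_n}$ stabilizes), and \eqref{smoothness of P} gives $P(\partial H_{0,u_*})=0$. Combining $P_n \xrightarrow{w} P$ with this set convergence through a standard argument — e.g.\ sandwiching $H_{0,u_n}$ between a slightly larger closed halfspace and a slightly smaller open one around $u_*$, applying portmanteau to each, and using $P(\partial H_{0,u_*})=0$ to close the gap — yields $\lim_n P_n(H_{0,u_n}) = P(H_{0,u_*}) \ge \AHD(x;P)$, so $\liminf_n \AHD(x_n;P_n) \ge \AHD(x;P)$. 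Together with the upper semi-continuity already proved, this gives the full limit.

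The main obstacle, as flagged, is handling the directions $u$ that are ``tangent'' in the sense $\ang{x,u}=0$: along a sequence such a $u$ may fail to witness membership $x_n \in H_{0,u}$, so one must know that the infimum defining $\AHD$ is approached by strictly interior directions. I would isolate this as a lemma: for every $x \in \Sph$ and $P \in \Prob[\Sph]$, $\AHD(x;P) = \inf\{P(H_{0,u}) : u \in \Sph,\ \ang{x,u} > 0\}$ when $x \neq \pm$ (anything giving a degenerate constraint), handled by tilting $u$ towards $x$ and invoking upper semi-continuity of $u \mapsto P(H_{0,u})$; the degenerate/antipodal edge cases are checked directly. Everything else is bookkeeping with the portmanteau theorem and compactness of $\Sph$, and the implication ``semi-continuity in both arguments $\Rightarrow$ condition~\ref{cond: spherical usc}'' is immediate by specializing to the constant sequence $P_n \equiv P$.
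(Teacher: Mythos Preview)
Your argument is correct, but it follows a different route from the paper's. The paper does not pick near-minimisers directly; instead it first proves that the auxiliary function $f(P,u)=P(H_{0,u})$ is jointly upper semi-continuous (continuous under~\eqref{smoothness of P}) in $(P,u)$ by an orthogonal-matrix trick: given $u_n\to u$ one chooses $O_n\to I_d$ with $O_nu_n=u$, pushes $P_n$ forward to $Q_n$ by $O_n$, notes $P_n(H_{0,u_n})=Q_n(H_{0,u})$, and then applies Slutsky plus the portmanteau theorem to the \emph{fixed} halfspace $H_{0,u}$. Having reduced everything to the semi-continuity of $f$, the paper observes that the constraint correspondence $v(x)=\{u:\ang{x,u}\ge0\}$ is continuous (Painlev\'e--Kuratowski) and invokes Berge's maximum theorem to conclude both the upper semi-continuity and the continuity parts in one stroke.

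Your approach is more hands-on: the density lemma $\AHD(x;P)=\inf\{P(H_{0,u}):\ang{x,u}>0\}$ (which does hold for every $x\in\Sph$ --- the parenthetical about degenerate cases is unnecessary and should be dropped) replaces the upper-hemicontinuity half of Berge, and the compactness extraction $u_n\to u_*$ combined with the sandwich $\{y:\ang{y,u_*}>\delta\}\subseteq H_{0,u_n}\cap\Sph\subseteq\{y:\ang{y,u_*}\ge-\delta\}$ for large $n$ replaces the lower-hemicontinuity half. What you gain is self-containment: no appeal to Berge, and the role of~\eqref{smoothness of P} is made completely explicit in the $\liminf$ step. What the paper's route buys is economy --- once $f$ is shown (semi-)continuous, both conclusions fall out of a single citation, and the rotation trick avoids having to juggle two moving objects $(P_n,u_n)$ simultaneously in the portmanteau argument.
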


A simple consequence of the upper semi-continuity of the function \eqref{function f} is that the infimum in the definition of the depth \eqref{eq: angular halfspace depth} does not have to be attained. Indeed, take $d = 2$, and $P \in \Prob[{\Sph[1]}]$ given as a mixture of the uniform distribution on the half-circle $\left\{ x \in \Sph[1] \colon \ang{x, e_2} > 0 \right\} = \Sph[1]_+$ and an atom at $\left(-1,0\right)\tr$ with equal weights $1/2$. Consider the $\AHD(x;P)$ of $x = \left(1,0\right)\tr$. Evidently, $\AHD\left(x;P\right) = 0$ with $P\left( H_{0,u_n} \right) \to 0$ as $u_n = \left( \cos(-\pi/2+1/n), \sin(-\pi/2+1/n) \right)\tr \to (0,-1)\tr$, but no closed halfspace $H_{0,u}$ with $P\left(H_{0,u}\right) = 0$ exists. The problem with the non-existence of a minimizing halfspace $H_{0,u}$ satisfying $\AHD(x; P) = P(H_{0,u})$ can be resolved by considering so-called flag halfspaces \citep{Pokorny_etal2022}; we develop that theory in Section~\ref{section: flag} below.

\subsection{Quasi-concavity of level sets}

Just as for \tHD{}, also \tAHD{} has convex upper level sets. In the following theorem, we show a stronger claim: the level set    
    \[  \AHD_\alpha(P) = \left\{ x \in \Sph \colon \AHD(x; P) \geq \alpha \right\}  \] 
can be written as an intersection of specific open hemispheres in $\Sph$. Condition~\ref{cond: spherical convexity} for $\AHD$ follows immediately since an intersection of (spherical) convex sets is always (spherical) convex.

\begin{theorem} \label{theorem: quasi-concavity}
For any $P \in \Prob$ and $\alpha \geq 0$ we have
    \begin{equation}    \label{eq: level set}
    \AHD_\alpha(P) = \bigcap \left\{ G \colon G\compl\in \half_0 \mbox{ and }P(G\compl)<\alpha \right\}.    
    \end{equation}
In particular, \tAHD{} satisfies both~\ref{cond: spherical monotonicity} and~\ref{cond: spherical convexity}.
\end{theorem}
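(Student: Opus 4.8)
The plan is to prove the set identity \eqref{eq: level set} by directly unwinding the definition of $\AHD$, and then to read off \ref{cond: spherical monotonicity} and \ref{cond: spherical convexity} as formal consequences; no appeal to the semicontinuity results of the previous subsection is needed. First I would fix $P\in\Prob$, $\alpha\ge 0$ and $x\in\Sph$, and rewrite membership in the level set. Since the family $\{P(H_{0,u})\colon u\in\Sph,\ x\in H_{0,u}\}$ is nonempty (e.g. $u=x$ works), the inequality $\AHD(x;P)\ge\alpha$ says exactly that $\alpha$ is a lower bound for this family, i.e. $P(H_{0,u})\ge\alpha$ for every $u\in\Sph$ with $x\in H_{0,u}$. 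Taking the contrapositive of the inner implication, this is equivalent to: for every $u\in\Sph$ with $P(H_{0,u})<\alpha$ one has $x\notin H_{0,u}$. Because $x\in\Sph$, for a closed halfspace $H=H_{0,u}\in\half_0$ the relation $x\notin H$ is precisely $x\in G$, where $G$ is the open hemisphere $\left\{ y\in\Sph\colon\ang{y,u}<0 \right\}$, i.e. the unique open hemisphere with $G\compl\in\half_0$ ($G\compl=H$ when complements are taken in $\R^d$, or equivalently $G=H\compl\cap\Sph$). Hence $x\in\AHD_\alpha(P)$ if and only if $x$ lies in every such $G$, which is exactly the right-hand side of \eqref{eq: level set}; for $\alpha=0$ the indexing family is empty and both sides are $\Sph$, consistent with the usual convention for the empty intersection.

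Next I would deduce the two desiderata. Each open hemisphere $G$ appearing in \eqref{eq: level set} is spherical convex, since its radial extension $\rad{G}=\left\{ y\in\R^d\colon\ang{y,u}<0 \right\}$ is convex. Moreover, $\rad{\bigcap_i A_i}=\bigcap_i\rad{A_i}$ for any family $\{A_i\}$ of subsets of $\Sph$ --- the inclusion ``$\subseteq$'' is immediate, and for ``$\supseteq$'' one uses that every $a\in A_i$ has $\norm a=1$, so a point $y$ in all $\rad{A_i}$ is scaled from the common point $y/\norm y$, which therefore lies in $\bigcap_i A_i$. Thus an intersection of spherical convex subsets of $\Sph$ is spherical convex, and applying this to \eqref{eq: level set} shows $\AHD_\alpha(P)$ is spherical convex, which is \ref{cond: spherical convexity}. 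Then \ref{cond: spherical monotonicity} follows: given $\mu\in\Sph$ satisfying \eqref{eq: maximality sphere} and $x\in\Sph\setminus\{-\mu\}$, set $\alpha=\AHD(x;P)$, so that $x,\mu\in\AHD_\alpha(P)$ and hence $\rad{\AHD_\alpha(P)}$ is a convex set containing $x$ and $\mu$, thus containing the whole segment $[\mu,x]\subset\R^d$ (which avoids the origin because $x\neq-\mu$). Since $\AHD_\alpha(P)=\rad{\AHD_\alpha(P)}\cap\Sph$, its radial projection onto $\Sph$, namely $\left\{ \left(\mu+\beta(x-\mu)\right)/\norm{\mu+\beta(x-\mu)}\colon\beta\in[0,1] \right\}$, is contained in $\AHD_\alpha(P)$, i.e. $\AHD(\cdot;P)\ge\alpha=\AHD(x;P)$ along this great-circle arc, which is exactly \ref{cond: spherical monotonicity}.

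The argument is short and essentially bookkeeping. The main thing to get right is the correspondence $H_{0,u}\leftrightarrow G$ between closed halfspaces through the origin and open hemispheres, so that the strict inequality $P(G\compl)<\alpha$ and the open/closed distinction line up correctly with the contrapositive of ``$\inf\ge\alpha$''; and one must treat the degenerate value $\alpha=0$ (where $\AHD_0(P)=\Sph$) consistently with the conventions adopted for empty intersections and for spherical convexity of the whole sphere.
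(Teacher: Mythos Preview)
Your proof is correct and follows essentially the same route as the paper: both arguments prove the set identity \eqref{eq: level set} by unwinding the definition of the infimum (the paper via two contrapositives, you via one equivalent reformulation), and both obtain \ref{cond: spherical convexity} from the fact that the central region is an intersection of spherically convex open hemispheres. You are more explicit than the paper in verifying that intersections of spherical convex sets are spherical convex (via $\rad{\bigcap_i A_i}=\bigcap_i\rad{A_i}$) and in spelling out the derivation of \ref{cond: spherical monotonicity} from \ref{cond: spherical convexity}, which the paper simply asserts; this extra care is appropriate and does not change the approach.
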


Comparing Theorem~\ref{theorem: quasi-concavity} with the related result for \tHD{} from \citet[Proposition~6]{Rousseeuw_Ruts1999}, we observe an intriguing discrepancy. While for \tHD{} the upper level sets $\HD_\alpha(P)$ can be written as intersections of \emph{closed} halfspaces whose complement has $P$-mass at most $\alpha$, in formula~\eqref{eq: level set} we used \emph{open} hemispheres. The following result shows that with closed hemispheres in~\eqref{eq: level set} we obtain only an inclusion.

\begin{theorem} \label{theorem: intersection}
For any $P \in \Prob$ and $\alpha \geq 0$ we have
    \begin{equation}    \label{eq: depth region as intersection of closed halfspaces}
    \bigcap\left\{H \colon H\in \half_0 \mbox{ and } P(H\compl)<\alpha \right\} \subseteq \AHD_\alpha(P).
    \end{equation}
\end{theorem}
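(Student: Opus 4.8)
The plan is to take any point $x$ in the left-hand intersection and show directly that $\AHD(x;P)\geq\alpha$, i.e. that every halfspace $H_{0,u}\in\half_0$ with $x\in H_{0,u}$ satisfies $P(H_{0,u})\geq\alpha$. So fix such a $u\in\Sph$ with $\ang{x,u}\geq 0$. If $\ang{x,u}>0$, then $x$ lies in the interior of $H_{0,u}$, equivalently $x\notin H_{0,-u}$, so $H_{0,-u}^{\mathsf c}$ is an open hemisphere containing $x$; since $x$ lies in the left-hand side of \eqref{eq: depth region as intersection of closed halfspaces}, we cannot have $P\bigl((H_{0,-u}^{\mathsf c})^{\mathsf c}\bigr)=P(H_{0,-u})<\alpha$ failing to force $x$ out — wait, more carefully: the left-hand intersection is over all $H$ with $P(H^{\mathsf c})<\alpha$, so $x$ being in the intersection means $x\in H$ for every such $H$. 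Contrapositively, if $x\notin H_{0,u}$ for some $H_{0,u}$ then $P(H_{0,u}^{\mathsf c})\geq\alpha$. Applying this with $H_{0,-u}$ (which does not contain $x$ when $\ang{x,u}>0$) yields $P(H_{0,u})=P(H_{0,-u}^{\mathsf c})\geq\alpha$, as desired.

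The remaining case is the boundary case $\ang{x,u}=0$, i.e. $x\in\partial H_{0,u}$; here $x$ lies in neither $H_{0,u}^{\mathsf c}$ nor $H_{0,-u}^{\mathsf c}$ as open hemispheres, so the argument above does not apply directly, and this is the main obstacle. The plan is to handle it by a limiting argument: perturb $u$ slightly toward $x$. Concretely, for the point $x$ with $\ang{x,u}=0$, choose a sequence $u_n\to u$ in $\Sph$ with $\ang{x,u_n}>0$ for every $n$ (possible since $x\neq\pm u$ cannot be assumed — but if $x=\pm u$ then $\ang{x,u}=\pm1\neq0$, contradiction, so indeed $x\neq\pm u$ and such a perturbation exists, e.g. rotating $u$ within the plane spanned by $u$ and $x$). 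By the case already handled, $P(H_{0,u_n})\geq\alpha$ for all $n$. Then I invoke upper semi-continuity of $P\mapsto P(H_{0,\cdot})$ along closed halfspaces: since $H_{0,u_n}\to H_{0,u}$ and these are closed sets, $\limsup_n P(H_{0,u_n})\leq P(H_{0,u})$ would go the wrong way — instead I use that the closed halfspaces $H_{0,u_n}$ can be chosen to \emph{decrease} to a set containing $H_{0,u}^{\circ}$, or alternatively argue via the portmanteau theorem. The cleanest route: pick $u_n\to u$ with $\ang{x,u_n}>0$ and additionally $H_{0,u}\subseteq H_{0,u_n}$ is false in general, so instead note $\liminf_n P(H_{0,u_n})\geq P(H_{0,u}^{\circ})$ is also not quite it. The right statement is that for $u_n\to u$, $P(\{y:\ang{y,u}>0\})\leq\liminf_n P(H_{0,u_n})$ by Fatou / portmanteau (open set), and $P(\{y:\ang{y,u}\geq0\})=P(\{y:\ang{y,u}>0\})+P(\partial H_{0,u})$; if one does not wish to assume smoothness, one observes that $\bigcup_n \{y:\ang{y,u_n}>0\}^{\mathsf c}$ may still miss $\partial H_{0,u}$, so a direct set-theoretic choice of the $u_n$ (all lying on one side so that the hyperplanes $\partial H_{0,u_n}$ sweep monotonically across) gives $H_{0,u}^{\mathsf c}\supseteq H_{0,u_n}^{\mathsf c}\uparrow (H_{0,u}^{\circ})^{\mathsf c}$, whence $P(H_{0,u_n}^{\mathsf c})\to P((H_{0,u}^{\circ})^{\mathsf c})\geq ?$; summing up, the honest version is $P(H_{0,u})\geq P(H_{0,u}^{\circ}) = \lim_n P(H_{0,u_n}^{\circ}) \geq \limsup_n\bigl(P(H_{0,u_n})-P(\partial H_{0,u_n})\bigr)$, and choosing the $u_n$ so that the hyperplanes are pairwise distinct and nested forces all but countably many $\partial H_{0,u_n}$ to have $P$-mass $0$, giving $P(H_{0,u})\geq P(H_{0,u_{n_k}})-\varepsilon_k\geq\alpha-\varepsilon_k\to\alpha$.

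So the skeleton is: (i) reduce to showing $P(H_{0,u})\geq\alpha$ for every $H_{0,u}\ni x$; (ii) dispatch the strict case $\ang{x,u}>0$ by applying the membership hypothesis to the complementary halfspace $H_{0,-u}$; (iii) for the boundary case $\ang{x,u}=0$, approximate $u$ by a sequence $u_n\to u$ with $\ang{x,u_n}>0$, chosen with pairwise distinct and monotonically sweeping boundary hyperplanes so that $P(\partial H_{0,u_n})=0$ for infinitely many $n$, apply (ii) to each $u_n$, and pass to the limit using monotone/portmanteau continuity of $P$ on the shrinking open hemispheres. The main obstacle is precisely step (iii): without assuming the smoothness condition \eqref{smoothness of P}, one must be a little careful that the approximating hyperplanes do not all carry positive $P$-mass, which is handled by the standard observation that a family of pairwise disjoint (or nested, hence essentially disjoint in mass) sets can have positive measure for only countably many members.
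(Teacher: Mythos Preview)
Your overall strategy --- reduce to showing $P(H_{0,u})\geq\alpha$ for every $H_{0,u}\ni x$, dispatch the interior case $\ang{x,u}>0$ by applying the membership hypothesis to the opposite halfspace $H_{0,-u}$, and handle the boundary case $\ang{x,u}=0$ by tilting $u$ toward $x$ --- is exactly the paper's approach, just argued directly rather than by contrapositive. The paper tilts first (to get $x$ strictly inside some $H$ with $P(H)<\alpha$), then takes complements; you take complements first, then tilt. Same content.

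There is, however, a confusion in step~(iii) that makes your write-up look much harder than it is. You assert that upper semi-continuity of $u\mapsto P(H_{0,u})$ ``would go the wrong way''. It goes the right way: you have $P(H_{0,u_n})\geq\alpha$ for all $n$, and upper semi-continuity (proved in the paper's Theorem~\ref{theorem:angular depth continuity}, or directly by reverse Fatou since $\limsup_n \mathbf 1_{H_{0,u_n}}\leq \mathbf 1_{H_{0,u}}$ pointwise) gives
\[
P(H_{0,u})\;\geq\;\limsup_{n\to\infty} P(H_{0,u_n})\;\geq\;\alpha.
\]
That is the entire boundary case; the elaborate detour through monotone sweeps, pairwise-disjoint hyperplanes with null boundary mass, and $\varepsilon_k$'s is unnecessary (and, as written, not correct: the rotated halfspaces $H_{0,u_n}$ are \emph{not} nested, so the claimed monotone-convergence identity $P(H_{0,u}^\circ)=\lim_n P(H_{0,u_n}^\circ)$ is unjustified). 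A small slip in step~(ii) as well: $H_{0,-u}\compl$ is the \emph{open} halfspace $H_{0,u}^\circ$, so the hypothesis yields $P(H_{0,u}^\circ)\geq\alpha$, not $P(H_{0,u})=P(H_{0,-u}\compl)$; of course $P(H_{0,u})\geq P(H_{0,u}^\circ)\geq\alpha$ anyway, so the conclusion stands.
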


It is interesting to see that the opposite inclusion from \eqref{eq: depth region as intersection of closed halfspaces} does not hold. An example can be found in Appendix~\ref{app: example}. Formula~\eqref{eq: level set} draws connections of \tAHD{} with \emph{spherical convex floating bodies} studied in convex geometry \citep{Besau_Werner2016}. Indeed, for $P \in \Sph$ uniform on a spherical convex body\footnote{A \emph{spherical convex body} is a closed spherically convex set $K \subseteq \Sph$ such that $\rad{K}$ has non-empty interior.} $K \subset \Sph$ the spherical convex floating body can be defined precisely as $\AHD_\alpha(P)$ with appropriate $\alpha > 0$ \citep[Definition~1]{Besau_Werner2016}. This observation parallels the connections between classical (Euclidean) floating bodies and \tHD{} leveraged in \citet{Nagy_etal2019}. A more detailed analysis of the relations of spherical floating bodies with \tAHD{} can be conducted using tools from~\citet[Section~3]{Laketa_Nagy2022}.

\subsection{Minimality at the anti-median and constancy on a hemisphere}    \label{section: flag}

The depth \tAHD{} has an interesting property observed first in \citet[Proposition~4.6]{Liu_Singh1992}. For any distribution on a sphere, there exists a hemisphere $H$ on which \tAHD{} is constant. For each $x \in H$, we then have that $\AHD(x; P)$ is equal to the minimum $P$-mass of a hemisphere in $\Sph$. Especially in connection with our property~\ref{cond: minimality}, it is important to note that this property does not hold true for closed hemispheres. 

\begin{example} \label{example: flag}
Take $P \in \Prob$ a mixture of a uniform distribution on $\Sph_+$ with weight $1/2$ and an atom of mass $1/2$ at some point $z\in \Sph_0$. Then, $\AHD(z; P)=1/2$ and $\AHD(x; P)=0$ for each $x \in \left(\Sph_- \cup \Sph_0\right)\setminus\{z\}$. For points $x \in \Sph_+$ we have $\AHD(x;P) \in (0,1/2)$. Thus, the unique angular halfspace median of $P$ is $z$. The infimum $P$-mass of a (closed) hemisphere is $0$, but because of the point mass at $z$, no closed hemisphere in $\Sph$ has constant null \tAHD{}. Still, $\AHD(-z; P) = 0 = \inf_{x \in \Sph} \AHD(x; P)$, and condition~\ref{cond: minimality} is satisfied for this particular $P$.
\end{example}

The appropriate context to study the set of minimum \tAHD{} is that of flag halfspaces, recently introduced in~\citet{Pokorny_etal2022}. There, a slightly more general version of the following definition can be found.

\begin{definition}
Define $\flag$ as the system of all sets $F$ of the form 
	\begin{equation}	\label{eq: flag halfspace}
	F = \{0\} \cup \left(\bigcup_{k=1}^d G_k\right).	
	\end{equation}
Here, 
    \begin{itemize}
    \item $G_d \subset \R^d$ is an open halfspace whose boundary passes through the origin. 
    \item For every $k = 1, \dots, d-1$, the set $G_k$ is an open halfspace inside the $k$-dimensional relative boundary of $G_{k+1}$, such that the relative boundary of $G_k$ passes through the origin.
    \end{itemize}
Any element of $\flag$ is called a \emph{flag halfspace}.
\end{definition}

A flag halfspace in $\R^3$ is the union of \begin{enumerate*}[label=(\roman*)] \item an open halfspace $G_3$ whose boundary passes through the origin, \item a relatively open halfplane $G_2$, inside the plane $\partial G_3$, whose relative boundary passes through the origin, and \item a ray $\{0\} \cup G_1$ from the origin $0 \in \R^3$ into one of the two directions in the line given by the relative boundary of $G_2$.\end{enumerate*}

Flag halfspaces are interesting because of their connections with \tHD{}. As proved in \citet[Theorem~1]{Pokorny_etal2022}, the infimum in the definition~\eqref{eq: halfspace depth first expression} of \tHD{} can always be replaced by a minimum, if one searches through flag halfspaces instead of closed halfspaces. An analogous result turns out to be true also for \tAHD{}, as stated in the following theorem.

\begin{theorem} \label{theorem: flag}
For any $P \in \Prob$ and $x \in \Sph$ we have
    \[  \AHD(x; P) = \min \left\{ P(F) \colon F \in \flag \mbox{ and } x \in F \right\}.  \]
In particular, there always exists $F \in \flag$ such that $x \in F$ and $\AHD(x; P) = P(F)$.
\end{theorem}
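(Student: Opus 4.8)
The plan is to reduce the statement to the known flag-halfspace representation of the Euclidean halfspace depth from \citet[Theorem~1]{Pokorny_etal2022}, transported through the gnomonic projection via Theorem~\ref{theorem: relation}. First I would dispatch the degenerate cases. If $x \in \Sph_0$ lies on the equator of some hemisphere carrying zero $P$-mass, or more generally if $\AHD(x;P)=0$, one can exhibit a flag halfspace $F$ with $x \in F$ and $P(F)=0$ directly, shrinking a minimizing sequence of open halfspaces $G_d$ down through lower-dimensional strata; the nested structure of the flag is exactly what lets us absorb the ``missing'' boundary mass. So assume $\AHD(x;P)>0$ and, invoking rotational invariance (Theorem~\ref{theorem: rotational}), reduce to the situation~\eqref{eq: zero equator}, i.e.\ $P(\Sph_0)=0$ and $x \in \Sph_+$.

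Next I would set up the correspondence between flag halfspaces through the origin in $\R^d$ and flag halfspaces in $\R^{d-1}=\Gplus$. A flag halfspace $F = \{0\}\cup\bigcup_{k=1}^d G_k$ in $\R^d$ intersects the tangent hyperplane $\Gplus$; because $G_d$ is an \emph{open} halfspace with boundary through the origin, $G_d \cap \Gplus$ is either an open halfspace of $\Gplus$, all of $\Gplus$, or empty, i.e.\ a generalized halfspace, and then the lower strata $G_{d-1},\dots$ contribute a genuine $\R^{d-1}$-flag on its relative boundary — so $F\cap\Gplus$ is a flag halfspace in $\R^{d-1}$ (in the slightly generalized sense of \citet{Pokorny_etal2022}, allowing the top piece to be the whole space or empty). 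Conversely every flag halfspace of $\R^{d-1}$ arises this way, by taking radial cones. The key bookkeeping identity is that, under this correspondence, $P(F) = P(\Sph_-) + \bigl(P_+(F\cap\Gplus) - P_-((F\cap\Gplus)^\circ)\bigr)$: points of $\Sph_-$ project to the relative interior part, so they are subtracted off exactly when the projected point lies in the \emph{open} part of the projected flag, which matches the $H^\circ$ appearing in~\eqref{eq: general AHD formula}. Here I'd lean on the fact that a flag halfspace, being a disjoint union of relatively open pieces plus $\{0\}$, has a well-behaved notion of ``interior at each level'', so the $P_-(\cdot^\circ)$ correction behaves additively across strata.

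With this dictionary in place, Theorem~\ref{theorem: relation} gives $\AHD(x;P) = P(\Sph_-) + \inf\{P_+(H)-P_-(H^\circ): H\in\genhalf,\ \xi(x)\in H\}$, and the Euclidean flag theorem of \citet{Pokorny_etal2022}, applied to the signed-measure refinement — here I would either cite their signed/finite-measure version or re-run their argument, which works verbatim for $P_\pm$ since it is purely combinatorial-geometric and the functional $H\mapsto P_+(H)-P_-(H^\circ)$ is still monotone under the shrinking of halfspaces used there — shows this infimum over generalized halfspaces equals the minimum of $P_+(F')-P_-((F')^\circ)$ over flag halfspaces $F'$ in $\R^{d-1}$ containing $\xi(x)$. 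Pulling that minimizing $F'$ back to its radial flag $F\in\flag$ with $x\in F$ and using the identity from the previous paragraph yields $\AHD(x;P)=P(F)$, and the reverse inequality $\AHD(x;P)\le P(F)$ for every admissible $F$ is immediate because $F$ contains arbitrarily small open halfspaces $H_{0,u}\ni x$ in its closure so $P(F)\ge \inf_{u} P(H_{0,u})$ by upper semicontinuity of $u\mapsto P(H_{0,u})$ on the relevant compact set — actually one gets $P(F) \ge \AHD(x;P)$ directly since $\cl(G_d)\supseteq$ some $H_{0,u}\ni x$.

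The main obstacle I anticipate is the careful handling of the $P_-$-mass sitting on the successive relative boundaries $\partial G_{k+1}$: without the smoothness hypothesis~\eqref{eq: negative smoothness} we only have the ``generalized halfspace with $H^\circ$-correction'' form~\eqref{eq: general AHD formula}, not the clean $\HD(\xi(x);P_\pm)$ form, so I must verify that the Euclidean flag theorem is compatible with that corrected functional — i.e.\ that shrinking an open halfspace $H$ down onto a flag never \emph{increases} $P_+(H)-P_-(H^\circ)$ in the limit, which requires checking that the negative mass picked up on the new lower-dimensional open pieces is at least compensated by negative mass lost from $H^\circ$. This is plausible because the flag pieces $G_1,\dots,G_{d-1}$ are precisely the ``limiting slivers'' of the shrinking halfspaces, but making it rigorous is where the real work lies; I would do it by an explicit induction on dimension, peeling off $G_d$ first (a single application of the argument in \citet{Pokorny_etal2022}) and then applying the inductive hypothesis to the restriction of $P_\pm$ to the hyperplane $\partial G_d \cong \R^{d-2}$.
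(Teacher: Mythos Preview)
Your approach is materially different from the paper's, and more complicated than necessary. The paper's proof is a single sentence: it says the argument of \citet[Theorem~1]{Pokorny_etal2022} adapts directly to the angular setting. That argument --- take a minimizing sequence of closed halfspaces, pass to a subsequential limit, then recursively peel off relatively open halves of the boundary to absorb any mass lost in the limit --- uses nothing about the halfspaces beyond the fact that they form a compact family and that boundaries are lower-dimensional affine subspaces. In the angular situation one simply restricts throughout to halfspaces $H_{0,u}\in\half_0$ whose boundaries pass through the origin; the limiting halfspace is again in $\half_0$, and the peeled pieces are precisely the $G_{d-1},G_{d-2},\dots$ of a flag halfspace in $\flag$ as defined here. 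No projection, no signed measures, no case analysis on $\AHD(x;P)=0$ is needed.

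By contrast, your route through Theorem~\ref{theorem: relation} forces you to confront exactly the obstacle you flag at the end: without assumption~\eqref{eq: negative smoothness} you only have the corrected functional $H\mapsto P_+(H)-P_-(H^\circ)$, and the Pokorn\'y--Laketa--Nagy theorem is not stated for such functionals, so you must re-prove it in that setting. But once you are re-running their argument anyway, you may as well run it in $\R^d$ for the original measure $P$ over halfspaces in $\half_0$, which is the paper's approach and sidesteps the signed-measure bookkeeping entirely. A secondary wrinkle in your plan is the correspondence between flags in $\flag$ and flags in $\R^{d-1}$: while $F\cap\Gplus$ is indeed a nested family of relatively open half-flats, its strata are affine pieces of $\Gplus$ whose boundaries need not pass through $\xi(x)$, so it is not immediately a flag halfspace \emph{centered at $\xi(x)$} in the sense required to invoke the Euclidean theorem for $\HD(\xi(x);\cdot)$; making that identification precise is extra work that the direct adaptation avoids.
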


Armed with the notion of a flag halfspace, we now prove a sharp version of the claim on hemispheres of minimum \tAHD{}.

\begin{theorem} \label{theorem: minimizing}
Let $P \in \Prob$. Then there exists a flag halfspace $F \in \flag$ that satisfies 
    \begin{equation}    \label{eq: minimum flag}
    P(F) = \inf \left\{ P(G) \colon G \in \flag \right\}.
    \end{equation}
For every $x \in F$ we then have $\AHD(x; P) = \min_{y \in \Sph} \AHD(y; P)$. In particular, \ref{cond: minimality} is true for \tAHD{}.
\end{theorem}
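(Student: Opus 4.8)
The plan is to prove the \emph{anti-median} part --- condition~\ref{cond: minimality} --- directly from the definition~\eqref{eq: angular halfspace depth}, and then to extract the minimizing flag halfspace cheaply from Theorem~\ref{theorem: flag}; no further structural facts about flag halfspaces are needed beyond that theorem.

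So first I would fix a deepest point $\mu\in\Sph$, i.e.\ one with $\AHD(\mu;P)=\alpha^*:=\sup_{x\in\Sph}\AHD(x;P)$; such a $\mu$ exists because $\AHD(\cdot;P)$ is upper semi-continuous by Theorem~\ref{theorem:angular depth continuity} and $\Sph$ is compact. The crucial step is a contrapositive reading of~\eqref{eq: angular halfspace depth}. Since $\alpha^*=\AHD(\mu;P)=\inf\{P(H_{0,u})\colon u\in\Sph,\ \mu\in H_{0,u}\}$, every $u\in\Sph$ with $\mu\in H_{0,u}$ has $P(H_{0,u})\ge\alpha^*$; equivalently, $P(H_{0,u})<\alpha^*$ forces $\mu\notin H_{0,u}$, that is $\ang{\mu,u}<0$, hence $\ang{-\mu,u}>0$ and $-\mu\in H_{0,u}$. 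Thus every halfspace $H_{0,u}$ with $P(H_{0,u})<\alpha^*$ is admissible in the infimum~\eqref{eq: angular halfspace depth} defining $\AHD(-\mu;P)$, so $\AHD(-\mu;P)\le\inf\{P(H_{0,u})\colon u\in\Sph,\ P(H_{0,u})<\alpha^*\}$. I would then record the elementary identity $c^*:=\inf_{x\in\Sph}\AHD(x;P)=\inf_{u\in\Sph}P(H_{0,u})$ (for ``$\le$'' use $u\in H_{0,u}$). If $\alpha^*=c^*$ then $\AHD(\cdot;P)$ is constant on $\Sph$ and~\ref{cond: minimality} is trivial. If $\alpha^*>c^*$, then $\{u\in\Sph\colon P(H_{0,u})<\alpha^*\}$ contains directions with $P(H_{0,u})$ arbitrarily close to $c^*$, so the last infimum equals $c^*$; together with the trivial bound $\AHD(-\mu;P)\ge c^*$ this yields $\AHD(-\mu;P)=c^*=\inf_{x\in\Sph}\AHD(x;P)$, which is precisely~\ref{cond: minimality}.

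Granting this, the flag assertion follows by a short bootstrap. Applying Theorem~\ref{theorem: flag} at $-\mu$ gives $F\in\flag$ with $-\mu\in F$ and $P(F)=\AHD(-\mu;P)=c^*$. For arbitrary $G\in\flag$ I would use that, by~\eqref{eq: flag halfspace}, $G$ contains an open halfspace through the origin and hence some $y\in\Sph$; then Theorem~\ref{theorem: flag} gives $\AHD(y;P)=\min\{P(G')\colon G'\in\flag,\ y\in G'\}\le P(G)$, so $P(G)\ge\AHD(y;P)\ge c^*=P(F)$. Hence $P(F)=\inf\{P(G)\colon G\in\flag\}$, i.e.~\eqref{eq: minimum flag} holds. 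Finally, for any $x\in F\cap\Sph$, Theorem~\ref{theorem: flag} gives $\AHD(x;P)\le P(F)=c^*=\inf_{y\in\Sph}\AHD(y;P)\le\AHD(x;P)$, so $\AHD(x;P)=\min_{y\in\Sph}\AHD(y;P)$, and in particular this infimum is attained.

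I do not expect a genuine obstacle: the argument is short, and the only delicate point is the bookkeeping between $\AHD(\mu;P)$ written as $\inf\{P(H_{0,u})\colon\mu\in H_{0,u}\}$, the identity $\inf_x\AHD(x;P)=\inf_u P(H_{0,u})$, and the strict inequality ``$<\alpha^*$'', together with isolating the degenerate constant-depth case $\alpha^*=c^*$. One could instead try to prove~\eqref{eq: minimum flag} by compactness, parametrizing $\flag$ by the orthogonal group and extracting a convergent minimizing sequence of frames; I would avoid this, since the map from a frame to the $P$-mass of the corresponding flag halfspace need not be lower semi-continuous --- a small rotation can shed mass lying on a boundary hyperplane --- so that route needs extra care that the antipodal argument above sidesteps.
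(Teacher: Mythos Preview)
Your proof is correct, but it proceeds in the opposite order from the paper. The paper first obtains the minimizing flag halfspace $F$ by identifying $\inf\{P(G)\colon G\in\flag\}$ with the Euclidean halfspace depth $\HD(0;P)$ in $\R^d$ and invoking \citet[Theorem~1]{Pokorny_etal2022} directly for existence of the minimizer; the constancy of $\AHD$ on $F\cap\Sph$ and~\ref{cond: minimality} are then read off from Theorem~\ref{theorem: flag} together with the flag property that $x\in F\Leftrightarrow -x\notin F$. You instead establish~\ref{cond: minimality} first, via the clean observation that $P(H_{0,u})<\alpha^*$ forces $\mu\notin H_{0,u}$ and hence $-\mu\in H_{0,u}$, combined with the elementary identity $\inf_{x}\AHD(x;P)=\inf_{u}P(H_{0,u})$; only afterwards do you pull out the minimizing $F$ from Theorem~\ref{theorem: flag} applied at $-\mu$.

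Both routes are short. Yours has the merit of being self-contained modulo Theorem~\ref{theorem: flag}: you never need to re-invoke the external existence result from \citet{Pokorny_etal2022}, nor the $x\in F\Leftrightarrow -x\notin F$ lemma that the paper uses to deduce~\eqref{eq: min max} and hence~\ref{cond: minimality}. The paper's route, on the other hand, yields~\eqref{eq: min max} for \emph{every} $x\in\Sph$ as a byproduct, which is slightly more than~\ref{cond: minimality} and is used later in the proof of Theorem~\ref{theorem: maximality}; your argument gives only the instance at $x=\mu$.
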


As a consequence of Theorem~\ref{theorem: minimizing}, we obtain much more than just condition~\ref{cond: minimality} for \tAHD{}. It holds true that for any $F \in \flag$, we have $x \in F \cap \Sph$ if and only if $-x \notin F \cap \Sph$ \citep[Lemma~2.3]{Laketa_etal2022}. Thus, for any $x \in \Sph$, at least one of the antipodal directions $x, -x \in \Sph$ attains the minimum \tAHD{}
    \begin{equation}    \label{eq: min max}
    \min\{ \AHD(x; P), \AHD(-x; P) \} = \min_{y \in \Sph} \AHD(y; P).   
    \end{equation}
In our Example~\ref{example: flag}, for instance, we get that $F \in \flag$ satisfying \eqref{eq: minimum flag} is any flag halfspace of the form~\eqref{eq: flag halfspace} such that $G_d \cap \Sph = \Sph_-$ and $z \notin G_{d-1}$. This flag halfspace $F$ has null $P$-mass.

\subsection{Maximality at the center}

The following theorem states that condition~\ref{cond: spherical maximality} is satisfied for \tAHD{}.

\begin{theorem} \label{theorem: maximality}
Let $P \in \Sph$ be rotationally symmetric around $\mu \in \Sph$. Then 
    \[  \max\left\{ \AHD(\mu; P), \AHD(-\mu; P) \right\} = \max_{x \in \Sph} \AHD(x; P),    \]
and~\ref{cond: spherical maximality} is valid for \tAHD{}.
\end{theorem}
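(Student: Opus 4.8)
The plan is to use rotational symmetry to collapse the minimisation defining $\AHD$ to a one–dimensional problem. The key observation is that for $P$ rotationally symmetric around $\mu$, the mass $P(H_{0,u})$ depends on $u\in\Sph$ only through the inner product $\langle u,\mu\rangle$. Indeed, if $\langle u,\mu\rangle=\langle u',\mu\rangle$ then there is an orthogonal matrix $O$ with $O\mu=\mu$ and $Ou=u'$: the stabiliser of $\mu$ in the orthogonal group acts transitively on each ``latitude'' $\{v\in\Sph\colon\langle v,\mu\rangle=c\}$ — for $d\ge3$ this latitude is a sphere $\Sph[d-2]$ on which $O(d-1)$ acts transitively, and for $d=2$ it consists of at most two points which the reflection fixing $\mu$ interchanges. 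Since $H_{0,u'}=O\,H_{0,u}$ and $P$ is invariant under $O$, we get $P(H_{0,u'})=P(H_{0,u})$, so we may write $P(H_{0,u})=g(\langle u,\mu\rangle)$ for a function $g\colon[-1,1]\to[0,1]$. (One could first normalise $\mu=e_d$ via Theorem~\ref{theorem: rotational}, but this step is already coordinate-free.)

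Next I would evaluate the two candidate maxima. Since $\mu\in H_{0,u}$ iff $\langle u,\mu\rangle\ge0$, and $\langle u,\mu\rangle$ ranges over all of $[0,1]$ as $u$ runs through $\{u\in\Sph\colon\langle u,\mu\rangle\ge0\}$, we obtain $\AHD(\mu;P)=\inf_{t\in[0,1]}g(t)=:a$, and symmetrically $\AHD(-\mu;P)=\inf_{t\in[-1,0]}g(t)=:b$.

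The heart of the argument is then to show $\AHD(x;P)\le\max\{a,b\}$ for every $x\in\Sph$. Put $s=\langle x,\mu\rangle$ and suppose first $s\ge0$; we may assume $x\ne\pm\mu$, so $v:=(x-s\mu)/\norm{x-s\mu}$ is a unit vector orthogonal to $\mu$ with $x=s\mu+\sqrt{1-s^2}\,v$. For each $t\in[0,1]$ set $u_t:=t\mu+\sqrt{1-t^2}\,v\in\Sph$. Then $\langle u_t,\mu\rangle=t$, hence $P(H_{0,u_t})=g(t)$, and $\langle x,u_t\rangle=st+\sqrt{1-s^2}\sqrt{1-t^2}\ge0$, so $x\in H_{0,u_t}$ and therefore $\AHD(x;P)\le g(t)$. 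Taking the infimum over $t\in[0,1]$ yields $\AHD(x;P)\le a$. The case $s\le0$ is identical, ranging $t$ over $[-1,0]$ so that $st\ge0$ once more, and gives $\AHD(x;P)\le b$. Combining the two cases, $\sup_{x\in\Sph}\AHD(x;P)\le\max\{a,b\}=\max\{\AHD(\mu;P),\AHD(-\mu;P)\}$, and since the reverse inequality is trivial we get equality, which is exactly condition~\ref{cond: spherical maximality}.

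The only genuinely delicate point is the factoring identity $P(H_{0,u})=g(\langle u,\mu\rangle)$, and inside it the transitivity of the stabiliser of $\mu$ on latitudes — with the caveat that in dimension $d=2$, ``rotational symmetry around $\mu$'' means invariance under the single reflection fixing $\mu$, so one must check this reflection carries the (two-point) latitudes onto themselves. Once this is in place the remainder is a short one-dimensional estimate comparing infima over the overlapping intervals $T_s\supseteq[0,1]$ (for $s\ge0$) and $T_s\supseteq[-1,0]$ (for $s\le0$), so I expect no further difficulties.
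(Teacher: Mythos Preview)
Your proof is correct, but it follows a genuinely different route from the paper's. The paper first observes that $\AHD(x;P)$ is constant on latitudes $\{x:\langle x,\mu\rangle=c\}$ (via Theorem~\ref{theorem: rotational}), then argues by contradiction: if some $y\ne\pm\mu$ had strictly larger depth than both $\pm\mu$, the whole latitude through $y$ would lie in $\AHD_\alpha(P)$, and the spherical convexity of that region (Theorem~\ref{theorem: quasi-concavity}) would force $\mu$ or $-\mu$ into it unless $\langle y,\mu\rangle=0$; the equatorial case is then excluded using the min--max identity~\eqref{eq: min max} from Theorem~\ref{theorem: minimizing}. Your argument instead factors the \emph{halfspace mass} as $P(H_{0,u})=g(\langle u,\mu\rangle)$ and, for each $x$, explicitly constructs a one-parameter family of admissible normals $u_t$ sweeping all of $[0,1]$ (or $[-1,0]$), so that $\AHD(x;P)\le\inf_{[0,1]}g=a$ (resp.\ $\le b$). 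This is more elementary and entirely self-contained: it does not rely on the quasi-concavity of the level sets or on the flag-halfspace machinery behind Theorem~\ref{theorem: minimizing}. The paper's proof, on the other hand, illustrates how~\ref{cond: spherical maximality} sits inside the structural theory already developed for \tAHD{}.
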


\subsection{Conclusion: Desirable properties of angular depths}

It remains to summarize our findings in Section~\ref{section: properties}: \tAHD{} verifies \ref{cond: rotational} by Theorem~\ref{theorem: rotational}; \ref{cond: spherical maximality} by Theorem~\ref{theorem: maximality}; \ref{cond: spherical monotonicity} by Theorem~\ref{theorem: quasi-concavity}; \ref{cond: minimality} by Theorem~\ref{theorem: minimizing}; \ref{cond: spherical usc} by Theorem~\ref{theorem:angular depth continuity}; \ref{cond: spherical convexity} by Theorem~\ref{theorem: quasi-concavity}; and \ref{cond: nonrigidity} because of Theorem~\ref{theorem: relation}. Overall, as argued in \citet{Nagy_etal2023}, it appears that \tAHD{} is the only angular depth function known in the literature that verifies all conditions \ref{cond: rotational}--\ref{cond: nonrigidity}. This, of course, does not mean that \tAHD{} is in any sense an optimal depth. It, however, hints that just as the classical \tHD{} in $\R^d$, also \tAHD{} in $\Sph$ has the potential to be useful in many applications in probability and statistics.

\section{Continuity and consistency properties} \label{section: additional properties}

We now focus on continuity and consistency properties of \tAHD{} that are finer in nature than the simple requirement~\ref{cond: spherical usc}. In Section~\ref{section: median}, we treat the set of \tAHD{}-based directional medians and show that this set is continuous as a set-valued mapping w.r.t. the topology of weak convergence in $\Prob$. Then, in Section~\ref{section: continuity}, we derive a uniform continuity theorem for \tAHD{} in the argument of measure. In Section~\ref{section: central regions}, we expand that theorem to the continuity of the central regions $\AHD_\alpha(P)$ from \eqref{eq: level set}. Finally, we summarize and apply all our previous advances to the sample \tAHD{} computed w.r.t. datasets in Section~\ref{section: large sample}, which gives remarkable strong uniform consistency properties for \tAHD{}. 

To state our results, recall that for compact sets $K, L \subset \R^d$ is the \emph{Hausdorff distance} of $K$ and $L$ defined as
    \begin{equation}    \label{eq: Haussdorf distance}
    \Hauss(K,L) = \max \left\{ \sup_{x \in K} \inf_{y \in L} \norm{ x - y }, \sup_{x \in L} \inf_{y \in K} \norm{ x - y }\right\}. 
    \end{equation}
For closed sets $K, L \subseteq \Sph$ we simply embed $\Sph$ into $\R^d$ canonically, and evaluate $\Hauss(K,L)$ in $\R^d$ with $\norm{\cdot}$ the Euclidean distance. It would, of course, be possible to modify~\eqref{eq: Haussdorf distance} to $\Sph$ by considering directly the arc distance length instead of the Euclidean distance. Thanks to the equivalence of all norms in finite-dimensional spaces, the topology of this modification remains the same as for $\Hauss$, and all our results thus hold true with both choices.

\subsection{Properties of the angular halfspace median} \label{section: median}

We are concerned with the continuity properties of the \tAHD{}-based set of directional medians, defined as the set of maximizers of \tAHD{} w.r.t. $P \in \Prob[\Sph]$
    \begin{equation}	\label{median mapping}
    \median(P) = \AHD_{\maxd(P)}(P) = \left\{ x \in \Sph \colon \AHD\left(x;P\right) = \maxd(P) \right\},	
    \end{equation}
with $\maxd(P) = \max_{y \in \Sph} \AHD\left(y;P\right)$. By Theorems~\ref{theorem:angular depth continuity} and~\ref{theorem: quasi-concavity}, we know that $\median(P)$ must be a non-empty compact (spherical) convex set. The following theorem extends results from \citet{Donoho_Gasko1992, Rousseeuw_Ruts1999}, and \citet{Mizera_Volauf2002} to the directional setup and \tAHD{}.

\begin{theorem} \label{theorem:angular depth median continuity}
The following properties hold true:
    \begin{enumerate}[label=(\roman*), ref=(\roman*)]
    \item \label{theorem i} The maximum depth mapping 
	\[	\maxd \colon \Prob[\Sph] \to \left[\frac{1}{d+1},1\right] \colon P \mapsto \sup_{y \in \Sph} \AHD\left(y;P\right)	\]    
    is upper semi-continuous. 
    \item \label{theorem ii} At any $P \in \Prob[\Sph]$ that satisfies \eqref{smoothness of P}, the mapping $\maxd$ is continuous and $\maxd(P) \leq 1/2$. 
    Further, the \tAHD{}-median mapping \eqref{median mapping} is at $P$ an outer semi-continuous set-valued mapping in the sense of \citet[Definition~5.4]{Rockafellar_Wets1998}. That is, for any $P_n \xrightarrow{w} P$ as $n \to \infty$ in $\Prob[\Sph]$ and $x_n \in \median(P_n)$ for each $n = 1, 2, \dots$, it holds true that all cluster points of the sequence $\left\{x_n\right\}_{n = 1}^\infty$ lie in $\median(P)$.
    \item \label{theorem v} Let $P \in \Prob[\Sph]$ satisfying~\eqref{smoothness of P} be such that $\median(P)$ is a singleton $\left\{ x \right\}$. Take any $P_n \xrightarrow{w} P$ as $n \to \infty$ in $\Prob[\Sph]$ and $x_n \in \median(P_n)$, $n = 1, 2, \dots$. Then there exists a sub-sequence $\left\{ x_{n(k)} \right\}_{k=1}^\infty$ of medians of $P_n$ such that $x_{n(k)} \to x$ as $k \to \infty$. In particular, the \tAHD{}-median mapping~\eqref{median mapping} is a continuous set-valued mapping in the sense of \citet[Definition~5.4]{Rockafellar_Wets1998}, and also continuous in the sense of convergence in the Hausdorff distance~\eqref{eq: Haussdorf distance}.
    \end{enumerate}
\end{theorem}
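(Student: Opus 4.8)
The plan is to dispatch the three assertions in order, the workhorse throughout being the joint (semi-)continuity of $\AHD$ from Theorem~\ref{theorem:angular depth continuity} together with compactness of $\Sph$; the only genuinely non-routine ingredient is the lower bound $\maxd(P)\ge\frac1{d+1}$, which I would obtain by reducing to the classical centerpoint theorem in $\R^d$ via a dichotomy on the location of the Euclidean centerpoint. For part~\ref{theorem i}: the upper bound $\maxd(P)\le1$ is trivial, and $\AHD(\cdot;P)$ attains its maximum on $\Sph$ since it is upper semi-continuous (Theorem~\ref{theorem:angular depth continuity}) on a compact set, so $\median(P)\ne\emptyset$ and $\maxd(P)=\max$. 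Regarding $P$ as a Borel probability measure on $\R^d$ supported on $\Sph$, the centerpoint theorem gives $z\in\R^d$ with $\HD(z;P)\ge\frac1{d+1}$ (Euclidean halfspace depth in $\R^d$). If $z\ne0$, then for every $u\in\Sph$ one has $z/\norm z\in H_{0,u}\iff z\in H_{0,u}$, and since the halfspaces through the origin containing $z$ form a subfamily of all closed halfspaces containing $z$, $\AHD(z/\norm z;P)=\inf\{P(H_{0,u})\colon z\in H_{0,u}\}\ge\HD(z;P)\ge\frac1{d+1}$. If $z=0$, then because $0\in\partial H_{0,u}\subseteq H_{0,u}$ for every $u$, the inequality $\HD(0;P)\ge\frac1{d+1}$ forces $P(H_{0,u})\ge\frac1{d+1}$ for all $u$, whence $\AHD(x;P)\ge\frac1{d+1}$ for \emph{every} $x\in\Sph$. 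In both cases $\maxd(P)\ge\frac1{d+1}$. Upper semi-continuity of $\maxd$ is then a standard argument: write $\maxd(P_n)=\AHD(x_n;P_n)$ with $x_n\in\Sph$, pass to a subsequence along which $\maxd(P_{n_k})\to\limsup_n\maxd(P_n)$ and $x_{n_k}\to x^\ast$, and use joint upper semi-continuity of $\AHD$ to get $\limsup_k\AHD(x_{n_k};P_{n_k})\le\AHD(x^\ast;P)\le\maxd(P)$.

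For part~\ref{theorem ii}, assume \eqref{smoothness of P}. The bound $\maxd(P)\le\frac12$ follows by picking, for a given $x\in\Sph$, a direction $u\in\Sph$ with $\langle x,u\rangle=0$ (possible for $d\ge2$); then $x\in H_{0,u}\cap H_{0,-u}$ and $P(\partial H_{0,u})=0$ gives $P(H_{0,u})+P(H_{0,-u})=1$, so $\AHD(x;P)\le\min\{P(H_{0,u}),P(H_{0,-u})\}\le\frac12$. Continuity of $\maxd$ at $P$: the $\limsup$ half is part~\ref{theorem i}, and for the $\liminf$ half fix $x^\ast\in\median(P)$ and note $\maxd(P_n)\ge\AHD(x^\ast;P_n)\to\AHD(x^\ast;P)=\maxd(P)$ by the continuity assertion of Theorem~\ref{theorem:angular depth continuity} (applicable because $P$ is smooth) with the constant sequence $x^\ast$. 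For outer semi-continuity of $\median$: if $P_n\xrightarrow{w}P$, $x_n\in\median(P_n)$, and $x_{n_k}\to x$, then $\AHD(x_{n_k};P_{n_k})=\maxd(P_{n_k})\to\maxd(P)$ by the continuity of $\maxd$ just established, while joint upper semi-continuity of $\AHD$ gives $\AHD(x;P)\ge\limsup_k\AHD(x_{n_k};P_{n_k})=\maxd(P)$; combined with $\AHD(x;P)\le\maxd(P)$ this yields $x\in\median(P)$.

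For part~\ref{theorem v}, assume \eqref{smoothness of P} and $\median(P)=\{x\}$. Given $P_n\xrightarrow{w}P$ and $x_n\in\median(P_n)$ (nonempty, as above), compactness of $\Sph$ produces a subsequence $x_{n(k)}\to x'$, and outer semi-continuity from part~\ref{theorem ii} forces $x'\in\median(P)=\{x\}$, i.e.\ $x_{n(k)}\to x$; applying this to every subsequence shows in fact $x_n\to x$ for \emph{every} selection $x_n\in\median(P_n)$. An $\varepsilon$-argument then upgrades this to $\sup_{y\in\median(P_n)}\norm{y-x}\to0$, hence $\Hauss(\median(P_n),\median(P))\to0$, and together with the (immediate, since $\median(P)$ is a singleton) inner semi-continuity this is exactly continuity of the set-valued map $\median$ at $P$ in the sense of \citet[Definition~5.4]{Rockafellar_Wets1998}. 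The main obstacle in the whole proof is the lower bound in part~\ref{theorem i}: everything else is a careful but routine bookkeeping with weak convergence, compactness, and Theorem~\ref{theorem:angular depth continuity}, whereas the centerpoint estimate genuinely needs the observation that halfspaces through the origin are precisely the ones seen by $\AHD$ and each of them carries $0$ on its boundary, which is what makes the reduction to the Euclidean centerpoint theorem go through in both cases of the dichotomy.
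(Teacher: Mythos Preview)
Your proof is correct. The main point of divergence from the paper is the lower bound $\maxd(P)\ge\frac{1}{d+1}$ in part~\ref{theorem i}. The paper proves this directly by contradiction: assuming $\maxd(P)<c<\frac{1}{d+1}$, it produces an open set $U\subset\Sph$ of normals of halfspaces with $P$-mass below $c$ that covers $\Sph$, shows via an auxiliary lemma (equivalence of ``origin in convex hull of $U$'' and ``$\bigcup_{u\in U}H_{0,u}=\Sph$'') plus Carath\'eodory that $d+1$ such halfspaces already cover, and reaches $1\le(d+1)c<1$. Your route instead imports the Euclidean centerpoint theorem for $P$ regarded as a measure on $\R^d$ and reduces via the dichotomy on whether the centerpoint is the origin; this is clean and avoids the custom lemma, at the price of invoking a result whose standard proof is essentially the same Carath\'eodory/Helly-type argument the paper spells out. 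For the remaining (semi-)continuity claims in parts~\ref{theorem i}--\ref{theorem v}, the paper dispatches them by a direct appeal to Berge's maximum theorem, whereas you reprove the relevant implications by hand (extracting maximizers, passing to subsequences, using joint upper semi-continuity and, under~\eqref{smoothness of P}, joint continuity from Theorem~\ref{theorem:angular depth continuity}); both are valid, and your $\varepsilon$-argument for the Hausdorff convergence in part~\ref{theorem v} is a welcome bit of explicitness that the paper leaves to a reference.
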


Note that in part~\ref{theorem i} of Theorem~\ref{theorem:angular depth median continuity}, we also claim that the depth \tAHD{} of a directional median on $\Sph$ cannot be lower than $1/(d+1)$. This bound is attained, for instance, for the peculiar atomic distribution $P\in\Prob[\Sph]$ described in the following example.

\begin{example} \label{ex:2}
Recall that $e_j \in \Sph$ is the $j$-th canonical vector, and write $e_{d+1} = \left(-1, \dots, -1\right)\tr/\sqrt{d} \in \Sph$. Consider $P\in\Prob[\Sph]$ the uniform measure supported in the set $\left\{e_1, \dots, e_d, e_{d+1}\right\}$. Then we have $\AHD\left(x;P\right) = 1/(d+1)$ for all $x \in \Sph$. The proof of this claim is in Appendix~\ref{app:ex2}.
\end{example}

The previous example is interesting because it demonstrates not only that there exists a distribution with maximum \tAHD{} equal to the lower bound from Theorem~\ref{theorem:angular depth median continuity}. Also, it shows that there is a distribution $X \sim P \in \Prob[\Sph]$ which fails to be origin-symmetric\footnote{A distribution $X \sim P\in\Prob[\Sph]$ is said to be \emph{origin symmetric} if $X$ and $-X$ have the same distribution.}, but its angular halfspace depth is constant on the whole sphere.

Parts~\ref{theorem ii} and~\ref{theorem v} of Theorem~\ref{theorem:angular depth median continuity} were stated for \tHD{} in \citet[Theorem~2]{Mizera_Volauf2002}. In the following example, we show that without the assumption of the uniqueness of the \tAHD{}-median of $P$, we cannot guarantee the continuity of the directional median set $\median(P)$. 

\begin{example}
Consider first $Q \in \Prob[\R]$ given as the mixture of uniform distributions on the intervals $[-2,-1]$ and $[1,2]$, each with weight $1/2$. The (standard Euclidean) halfspace median set of $Q$ is the whole interval $[-1,1]$. Now, for $\varepsilon > 0$ small and fixed, take $Q_n$ assigning mass $1/2 + \varepsilon/n$ to $[-2,-1]$ and mass $1/2 - \varepsilon/n$ to $[1,2]$. Certainly, $Q_n$ converges weakly to $Q$, but the median set of $Q_n$ is $\{ y_n \} = \left\{(n-4 \varepsilon)/(n+2 \varepsilon)\right\}$, which is contained inside the interval $[-2,-1]$. We see that the median mapping $\median$ for \tHD{} in $\R$ is outer semi-continuous but not continuous at $Q$. To obtain corresponding directional distributions, we project our setup to the upper semi-circle $\Sph[1]_+$ of the circle $\Sph[1]$ using the inverse gnomonic projection from Section~\ref{section: projection}. Then, we directly apply Theorem~\ref{theorem: relation}. 
\end{example}

\subsection{Continuity in measure}  \label{section: continuity}

Under an appropriate smoothness condition \eqref{smoothness of P}, the mapping \tAHD{} can be shown to be uniformly continuous w.r.t. the weak convergence of measures.

\begin{theorem} \label{theorem: uniform continuity}
Suppose $\left\{ P_n \right\}_{n=1}^\infty \subset \Prob[\Sph]$ is a sequence of measures such that $P_n \xrightarrow{w} P$ as $n \to \infty$, where $P \in \Prob[\Sph]$ satisfies the smoothness condition~\eqref{smoothness of P}. Then we can write
    \[  \sup_{x \in \Sph} \left\vert \AHD(x; P_n) - \AHD(x; P) \right\vert \xrightarrow[n\to\infty]{} 0. \]
\end{theorem}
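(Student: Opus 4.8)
The plan is to deduce the uniform continuity from the pointwise (in fact joint) upper semi-continuity of Theorem~\ref{theorem:angular depth continuity} by a standard compactness-plus-subsequence argument, using crucially that the smoothness condition~\eqref{smoothness of P} upgrades upper semi-continuity to full continuity at $P$. Suppose the conclusion fails. Then there is $\varepsilon > 0$, a subsequence (still denoted $P_n$), and points $x_n \in \Sph$ with $\left\vert \AHD(x_n; P_n) - \AHD(x_n; P) \right\vert \geq \varepsilon$ for all $n$. Since $\Sph$ is compact, pass to a further subsequence along which $x_n \to x$ for some $x \in \Sph$. The goal is then to show that both $\AHD(x_n; P_n) \to \AHD(x; P)$ and $\AHD(x_n; P) \to \AHD(x; P)$, which contradicts the above.

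The second convergence, $\AHD(x_n; P) \to \AHD(x; P)$, is exactly continuity of $x \mapsto \AHD(x; P)$ at $x$; this follows from Theorem~\ref{theorem:angular depth continuity} applied with the constant sequence $P_n \equiv P$, since $P$ satisfies~\eqref{smoothness of P}. The first convergence, $\AHD(x_n; P_n) \to \AHD(x; P)$, also follows directly from Theorem~\ref{theorem:angular depth continuity}: the smoothness assumption~\eqref{smoothness of P} on the limiting measure $P$ puts us in the second regime of that theorem, giving $\lim_{n\to\infty} \AHD(x_n; P_n) = \AHD(x; P)$ whenever $x_n \to x$ and $P_n \xrightarrow{w} P$. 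Combining the two displays, $\left\vert \AHD(x_n; P_n) - \AHD(x_n; P) \right\vert \to 0$, contradicting that this quantity is bounded below by $\varepsilon$. Hence the supremum tends to zero.

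I expect the main (and essentially only) obstacle to be the invocation of the continuity half of Theorem~\ref{theorem:angular depth continuity} with a \emph{moving} base point $x_n$ rather than a fixed one; one should check that the statement of that theorem indeed allows $x_n \to x$ jointly with $P_n \xrightarrow{w} P$, which it does as stated. Everything else is the routine ``uniform convergence of a sequence of functions on a compact set follows from joint continuity of the limit'' packaging, realized here through the negation-and-subsequence device. No quantitative modulus is needed, and no additional properties of $\AHD$ beyond Theorem~\ref{theorem:angular depth continuity} enter.
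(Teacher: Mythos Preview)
Your proof is correct, and it takes a genuinely different route from the paper's.

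The paper argues directly and quantitatively: it bounds
\[
\sup_{x \in \Sph}\left\vert \AHD(x;P_n)-\AHD(x;P)\right\vert
\leq \sup_{H\in\half_0}\left\vert P_n(H)-P(H)\right\vert,
\]
using $\left\vert \inf f-\inf g\right\vert\leq\sup\left\vert f-g\right\vert$, and then invokes an external uniform-convergence result (a Glivenko--Cantelli-type statement for the class $\half_0$ under~\eqref{smoothness of P}) to conclude that the right-hand side vanishes. Your argument instead packages the conclusion as a consequence of joint continuity on a compact domain: you contradict failure by extracting a convergent subsequence $x_n\to x$ and applying the second part of Theorem~\ref{theorem:angular depth continuity} twice (once with $P_n$, once with the constant sequence $P$). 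This is entirely self-contained within the paper --- no external reference is needed --- and is arguably the cleaner proof of Theorem~\ref{theorem: uniform continuity} as stated.

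The paper's approach, however, buys something your argument does not: the intermediate halfspace bound is exactly what is reused in the proof of Theorem~\ref{theorem:12}\ref{theorem sample i}, where \emph{no} smoothness of $P$ is assumed and one instead relies on $\half_0$ being a Glivenko--Cantelli class to get almost sure uniform consistency. Your compactness argument hinges on the continuity clause of Theorem~\ref{theorem:angular depth continuity}, which requires~\eqref{smoothness of P}; it therefore does not transfer to that setting.
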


Without the smoothness condition~\eqref{smoothness of P}, we cannot assert that the uniform continuity in Theorem~\ref{theorem: uniform continuity} is true. This can be seen in an example where $P$ is concentrated in $e_d \in \Sph$, but $P_n$ is uniform on a spherical cap in $\Sph$ around $e_d$ with (spherical) radius $1/n$. Then $P_n \xrightarrow[]{w} P$ as $n \to \infty$ and $\AHD(e_d; P) = 1$, but $\AHD(e_d; P_n) = 1/2$ for all $n$.

\subsection{Continuity of the central regions}  \label{section: central regions}

We now follow \citet{Dyckerhoff2017}, who proved that under certain conditions, the central regions $\HD_\alpha(P)$ from~\eqref{eq: central regions} are continuous in the Hausdorff distance as a function of $P \in \Prob[\R^d]$. We adapt those results from $\R^d$ to the setup of directional data in $\Sph$ and \tAHD{}. For that, we will need a modification of the strict monotonicity condition formulated in \citet{Dyckerhoff2017} for \tHD{} in $\R^d$. We phrase a related requirement for \tAHD{}; we say that \tAHD{} is \emph{strictly monotone} for $P \in \Prob[\Sph]$ if for each $\alpha \in \left(\min_{x \in \Sph} \AHD(x; P),\max_{x \in \Sph} \AHD(x;P)\right)$ we have
    \begin{equation}    \label{eq: strict monotonicity}
    \AHD_\alpha(P) = \cl{\left\{ x \in \R^d \colon \AHD(x; P) > \alpha  \right\}},   
    \end{equation}
where $\cl{A}$ stands for the closure of the set $A \subseteq \Sph$. Roughly speaking, strict monotonicity means that there are no regions of $\Sph$ of constant depth equal to $\alpha > \min_{x \in \Sph} \AHD(x; P)$. Of course, due to Theorem~\ref{theorem: minimizing}, we have to exclude the hemisphere of minimum $P$-mass (i.e., $\alpha > \min_{x \in \Sph} \AHD(x; P)$), since at $\alpha = \min_{x \in \Sph} \AHD(x; P)$ the condition~\eqref{eq: strict monotonicity} is never satisfied.

\begin{theorem}   \label{theorem: continuity of central regions}
Suppose that \tAHD{} is strictly monotone for $P \in \Prob[\Sph]$, and that $\left\{ P_n \right\}_{n=1}^\infty \subset \Prob[\Sph]$ is a sequence of measures that satisfies
    \begin{equation}    \label{eq: convergence condition}
    \lim_{n \to \infty} \sup_{x \in \Sph} \left\vert \AHD(x; P_n) - \AHD(x; P) \right\vert = 0.   
    \end{equation}
Then for any closed interval $A \subset (\min_{x \in \Sph} \AHD(x; P), \max_{x \in \Sph} \AHD(x; P))$ we can write
    \begin{equation}    \label{eq: uniform continuity of regions}
    \lim_{n \to \infty} \sup_{\alpha \in A} \Hauss\left( \AHD_\alpha(P_n), \AHD_\alpha(P) \right) = 0.
    \end{equation}   
\end{theorem}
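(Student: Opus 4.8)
\textbf{Proof proposal for Theorem~\ref{theorem: continuity of central regions}.}

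The plan is to transfer the uniform convergence of depth functions into Hausdorff convergence of level sets by the standard two-sided inclusion argument, adapted to the sphere. I would first record the easy inclusion: since the regions $\AHD_\alpha(P_n)$ and $\AHD_\alpha(P)$ are nested and compact, and since the depth functions are close by~\eqref{eq: convergence condition}, for any $\varepsilon > 0$ eventually $\AHD_\alpha(P_n) \subseteq \AHD_{\alpha - \varepsilon}(P)$ and $\AHD_\alpha(P) \subseteq \AHD_{\alpha - \varepsilon}(P_n)$ for all relevant $\alpha$. So the whole problem reduces to showing that the map $\alpha \mapsto \AHD_\alpha(P)$ is itself ``uniformly continuous'' on the closed interval $A$ in the Hausdorff metric — i.e. $\Hauss(\AHD_{\alpha-\varepsilon}(P), \AHD_\alpha(P)) \to 0$ as $\varepsilon \downarrow 0$, uniformly in $\alpha \in A$ — because then, chaining this with the two inclusions and the triangle inequality for $\Hauss$, one gets~\eqref{eq: uniform continuity of regions}.

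To prove that uniform continuity of $\alpha \mapsto \AHD_\alpha(P)$, I would argue by contradiction: if it fails, there are $\alpha_k \in A$, $\varepsilon_k \downarrow 0$, and points $x_k \in \AHD_{\alpha_k - \varepsilon_k}(P)$ at Hausdorff-distance (here, ordinary distance from the set) at least $\delta > 0$ from $\AHD_{\alpha_k}(P)$. By compactness of $\Sph$ and of $A$, pass to a subsequence with $x_k \to x$ and $\alpha_k \to \alpha_\infty \in A$. Upper semi-continuity of $\AHD(\cdot; P)$ (Theorem~\ref{theorem:angular depth continuity}) gives $\AHD(x; P) \geq \limsup_k (\alpha_k - \varepsilon_k) = \alpha_\infty$, so $x \in \AHD_{\alpha_\infty}(P)$. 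Now I invoke strict monotonicity: because $\alpha_\infty > \min_{y} \AHD(y; P)$, the region $\AHD_{\alpha_\infty}(P)$ equals the closure of $\{y : \AHD(y; P) > \alpha_\infty\}$, so $x$ can be approximated by points of depth strictly exceeding $\alpha_\infty$, hence of depth exceeding $\alpha_k$ for large $k$, i.e. points lying in $\AHD_{\alpha_k}(P)$. This contradicts the distance-$\delta$ separation once $k$ is large. (A symmetric but easier argument — or simple nestedness plus the fact that $\AHD_{\alpha_k}(P) \subseteq \AHD_{\alpha_k - \varepsilon_k}(P)$ — handles the other direction of the Hausdorff distance, namely that every point of $\AHD_{\alpha_k}(P)$ is close to $\AHD_{\alpha_k - \varepsilon_k}(P)$; that direction is trivial.)

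For the inclusions in the first paragraph I would use: compactness (each $\AHD_\alpha(P)$ is closed by upper semi-continuity, hence compact in $\Sph$) together with~\eqref{eq: convergence condition} to get, for fixed $\varepsilon$, an $N$ with $\sup_x |\AHD(x;P_n) - \AHD(x;P)| < \varepsilon$ for $n \geq N$; then $x \in \AHD_\alpha(P_n)$ forces $\AHD(x; P) > \alpha - \varepsilon$, i.e. $x \in \AHD_{\alpha - \varepsilon}(P)$, and symmetrically. Combined with the uniform continuity of $\alpha \mapsto \AHD_\alpha(P)$ on $A$ just established — choose $\varepsilon$ so small that $\Hauss(\AHD_{\alpha - \varepsilon}(P), \AHD_\alpha(P)) < \eta$ for all $\alpha \in A$, and note the same works on both sides after shrinking $A$ slightly inside the open interval so that $\alpha - \varepsilon$ still exceeds $\min_y \AHD(y;P)$ — we conclude $\Hauss(\AHD_\alpha(P_n), \AHD_\alpha(P)) < C\eta$ for all $\alpha \in A$ and all large $n$.

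The main obstacle is the strict-monotonicity step: it is exactly what rules out a ``plateau'' of the depth at level $\alpha_\infty$ that would let $\AHD_{\alpha_\infty - \varepsilon}(P)$ be strictly, non-negligibly larger than $\AHD_{\alpha_\infty}(P)$, and hence it is the place where the hypothesis is genuinely used and where one must be careful that the contradiction point $x$ really does admit nearby points of strictly larger depth (using the closure characterization~\eqref{eq: strict monotonicity} valid precisely for $\alpha_\infty$ in the open interval). A secondary technical point is to make sure the shrunk interval containing all the $\alpha_k - \varepsilon_k$ still lies above $\min_y \AHD(y; P)$, which is guaranteed since $A$ is a \emph{closed} subinterval of the open interval $(\min_x \AHD(x;P), \max_x \AHD(x;P))$, leaving a margin to absorb $\varepsilon_k$.
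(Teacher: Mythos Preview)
Your proposal is correct and follows the same approach as the paper, which adapts \citet[Theorem~4.5]{Dyckerhoff2017}: the argument rests on the sandwich inclusions $\AHD_{\alpha+\varepsilon}(P) \subseteq \AHD_\alpha(P_n) \subseteq \AHD_{\alpha-\varepsilon}(P)$ coming from~\eqref{eq: convergence condition}, combined with the Hausdorff continuity of $\alpha \mapsto \AHD_\alpha(P)$ on $A$ drawn from strict monotonicity. The paper defers those two ingredients to Dyckerhoff's Theorems~3.2 and~4.4 respectively, whereas you supply them directly via compactness of $\Sph$ and upper semi-continuity of $\AHD(\cdot;P)$.
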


The condition~\eqref{eq: convergence condition} is satisfied if $P$ is smooth (that is, \eqref{smoothness of P} is valid) by Theorem~\ref{theorem: uniform continuity}. The additional condition~\eqref{eq: strict monotonicity} of strict monotonicity of $P \in \Prob[\Sph]$ is more delicate. In the setup of \tHD{} in $\R^d$, it was argued in \citet[Section~4.3]{Laketa_Nagy2022} that smoothness of $P \in \Prob[\R^d]$ and the connectedness of its support already guarantee a variant of~\eqref{eq: strict monotonicity}. In $\Sph$, however, this is not enough, as we demonstrate in the following example.

\begin{example} \label{example: sets}
Take $X \sim P \in \Prob[{\Sph[1]}]$, where the random variable $X \in \Sph[1]$ is encoded by its angle $\theta \in (-\pi, \pi]$ with the positive first coordinate axis in $\R^2$. The measure $P$ is given as a mixture of four uniform distributions: \begin{enumerate*}[label=(\roman*)] \item of mass $1/2$ in the angle $(\pi/2, \pi]$, \item of mass $1/4$ in the angle $(0,\pi/4]$, \item of mass $1/8$ in the angle $[\pi/4, \pi/2)$, and \item of mass $1/8$ in the angle $[-3\pi/4, -\pi/2)$.\end{enumerate*} See also Figure~\ref{figure: strict monotonicity}. 

\begin{figure}
    \centering
    \includegraphics[width=.4\textwidth]{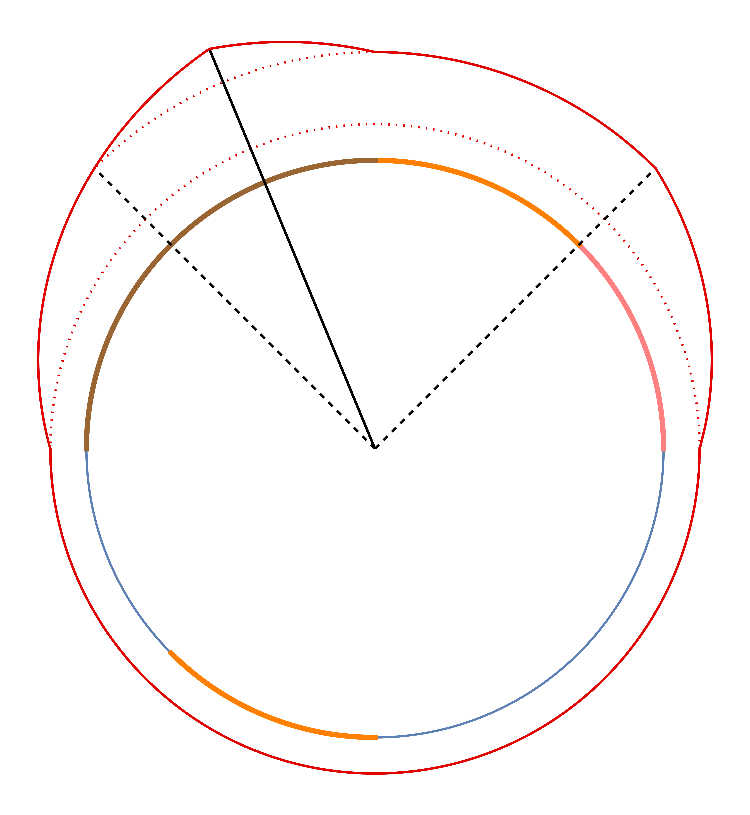} \quad
    \raisebox{1.5em}{\includegraphics[width=.5\textwidth]{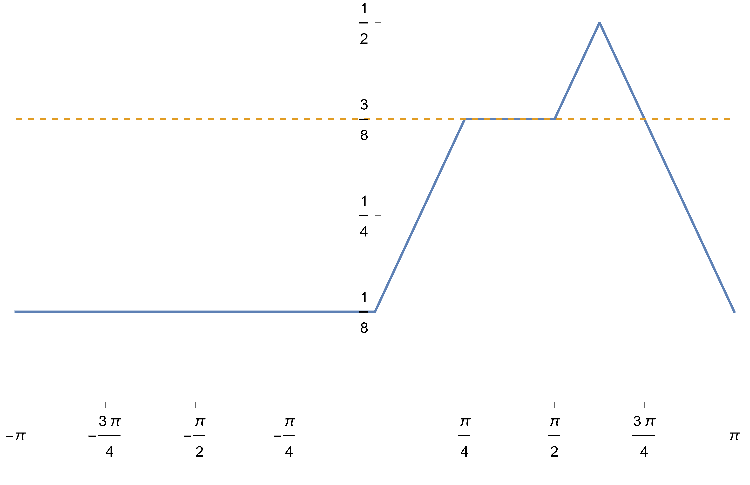}}
    \caption{Example~\ref{example: sets}: A distribution $P \in \Prob[{\Sph[1]}]$ that does not satisfy the condition of strict monotonicity~\eqref{eq: strict monotonicity} at $\alpha = 3/8$. In the left panel, the outer red curve corresponds to the \tAHD{} of $x(\theta) = (\cos(\theta), \sin(\theta) )\tr \in \Sph[1]$ as a function of the angle $\theta \in (-\pi,\pi]$; the depth \tAHD{} is maximized at the angle $\theta = 5\pi/8$ (solid straight line), and $\AHD_{3/8}(P)$ corresponds to the angles $\theta \in [\pi/4, 3 \pi/4]$ (wedge with dashed boundary lines). The depth \tAHD{} is constant for $\theta \in [\pi/4, \pi/2]$, meaning that \tAHD{} is not strictly monotone for $P$. In the right panel, we have the function $\theta \mapsto \AHD(x(\theta); P)$ (blue curve).}
    \label{figure: strict monotonicity}
\end{figure}

The minimum $P$-mass of a hemisphere is $1/8$, and $\AHD(x; P) = 1/8$ for all $x \in \Sph[1]_- \cup \Sph[1]_0$. For $x = (\cos(\theta), \sin(\theta))\tr \in \Sph[1]$ we get \tAHD{}
    \begin{equation}    \label{eq: computed ahd}
       \AHD(x; P) =    
        \begin{cases}
        1/8 & \mbox{for }\theta \in (-\pi,0], \\
        1/8 + \theta/\pi & \mbox{for }\theta \in (0,\pi/4], \\
        3/8 & \mbox{for }\theta \in (\pi/4, \pi/2], \\
        1/2 - \frac{\left\vert \theta - 5 \pi/8 \right\vert}{\pi} & \mbox{for }\theta \in (\pi/2, 3\pi/4], \\
        1/8 + \frac{\pi - \theta}{\pi} & \mbox{for }\theta \in (3\pi/4,\pi].
        \end{cases}    
    \end{equation}  
In particular, for $\alpha = 3/8$ we have that $\AHD_\alpha(P)$ corresponds to the interval of angles $\theta \in [\pi/4, 3\pi/4]$, but the (spherical) closure of the set $\left\{ x \in \Sph[1] \colon \AHD(x; P) > 3/8 \right\}$ is the interval of angles $\theta \in [\pi/2, 3\pi/4]$. The strict monotonicity condition~\eqref{eq: strict monotonicity} is, therefore, not satisfied for $\alpha = 3/8$. 

Construct now for $\varepsilon > 0$ small and fixed a sequence of measures $\left\{ P_n \right\}_{n=1}^\infty \subset \Prob[{\Sph[1]}]$ defined as similarly as $P$, but in the interval $\theta \in (\pi/2, \pi]$ we put $P_n$-mass $1/2 + (-1)^{n}\varepsilon$, and in the interval $(0,\pi/4]$ we assign $P_n$-mass $1/4 + (-1)^{n+1}\varepsilon$. In other words, for $n$ odd we increase the mass in $(0,\pi/4]$ slightly above $1/4$ and decrease the mass in $(\pi/2, \pi]$ below $1/2$, and for $n$ even the other way around. We certainly have that $P_n$ converges weakly to $P$ as $n \to \infty$, and since $P$ satisfies the smoothness condition~\eqref{smoothness of P}, Theorem~\ref{theorem: uniform continuity} also gives that $\sup_{x \in \Sph[1]} \left\vert \AHD(x; P_n) - \AHD(x; P) \right\vert$ vanishes as $n \to \infty$. Nevertheless, a simple computation as in~\eqref{eq: computed ahd} gives that for $\alpha = 3/8$ we have $\AHD_\alpha(P_n)$ contains the interval of angles $\theta \in (\pi/4, \pi/2]$ for $n$ odd, but not for $n$ even. As such, the sequence of sets $\left\{ \AHD_\alpha(P_n)\right\}_{n=1}^\infty$ does not converge in the Hausdorff distance to any set, and~\eqref{eq: uniform continuity of regions} cannot be true.
\end{example}

Note that the problem with Example~\ref{example: sets} does not rest in the fact that the support of $P$ is disconnected; one could easily add another mixture component supported on $\Sph[1]$ to $P$ with sufficiently low weight, and the same phenomenon appears. The core of the problem is in the symmetry of $P$ in the angle $\theta \in [\pi/4, \pi/2)$, giving that any halfspace $H_u \in \half_0$ with $u = (\cos(\theta), \sin(\theta))\tr$ and $\theta \in [3\pi/4, \pi/2]$ has the same $P$-mass equal to $\alpha = 3/8$. One natural way of enforcing~\eqref{eq: strict monotonicity} is to forbid these symmetries, as done in the following result.

\begin{theorem} \label{thm:xx}
Suppose that $P \in \Prob[\Sph]$ has a dominant hemisphere, meaning that there exists $u \in \Sph$ such that for all $B \subseteq \Sph \cap H_u$ of non-null spherical Lebesgue measure we have $P(B) > P(-B)$. Assume, in addition, that $P$ is smooth in the sense of~\eqref{smoothness of P}. Then is the condition~\eqref{eq: strict monotonicity} satisfied for $P$.
\end{theorem}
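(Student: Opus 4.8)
The plan is to establish \eqref{eq: strict monotonicity} by showing that every point of $\AHD_\alpha(P)$ that is not already in the closure of the strict super-level set can be approximated by points of strictly larger depth, using the dominant-hemisphere hypothesis to rule out the obstruction exhibited in Example~\ref{example: sets}. First I would reduce to the gnomonic picture of Section~\ref{section: projection}: after a rotation (Theorem~\ref{theorem: rotational}), assume the dominant hemisphere is $\Sph_+$ with pole $e_d$ and that \eqref{smoothness of P} holds, so that by Theorem~\ref{theorem: relation} and Theorem~\ref{theorem:angular depth continuity} the map $x \mapsto \AHD(x;P)$ is continuous on $\Sph$ and, restricted to $\Sph_+$, equals $P(\Sph_-) + \HD(\xi(x);P_\pm)$. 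Fix $\alpha \in (\min_{x}\AHD(x;P),\max_x \AHD(x;P))$ and take $x_0 \in \AHD_\alpha(P)$; by Theorem~\ref{theorem: quasi-concavity}, $\AHD_\alpha(P)$ is spherical convex and, since $\alpha$ exceeds the minimum, it is contained in $\Sph_+ \cup \Sph_0$, in fact (using that the minimizing hemisphere has $P$-mass $\min_x \AHD$ which is strictly below $\alpha$ by the dominant-hemisphere property forcing $P(\Sph_-)<P(\Sph_+)$) it lies in $\Sph_+$ together with part of $\Sph_0$; I would handle the interior of $\AHD_\alpha(P)$ first and boundary points by a limiting argument.

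The heart of the argument is a local perturbation claim: if $\AHD(x_0;P)=\alpha$ then for every neighbourhood $U$ of $x_0$ there is $x_1 \in U$ with $\AHD(x_1;P)>\alpha$. Working in $\Gplus \cong \R^{d-1}$ with $y_0 = \xi(x_0)$, the value $\HD(y_0;P_\pm)=\alpha-P(\Sph_-)=:\beta$ is attained or approached along halfspaces $H$ with $y_0 \in H$ and $P_\pm(H)$ near $\beta$; I would move in the direction of the (spherical) median, i.e. replace $y_0$ by a point $y_t$ on the segment towards $\xi(\mathfrak M(P))$, which by monotonicity (Theorem~\ref{theorem: quasi-concavity}, condition~\ref{cond: spherical monotonicity}) has depth $\ge \alpha$. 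The dominant-hemisphere hypothesis is exactly what upgrades this weak inequality to a strict one: I would argue that the set of "minimal" halfspaces through $y_0$ — those $H\in\genhalf$ with $y_0\in\partial H$ realising (or approximating to any precision) the infimum — cannot all be preserved under such a move, because the family of closed hemispheres $H_u$ with $u$ ranging over a small arc is, by the assumption $P(B)>P(-B)$ for $B\subseteq \Sph\cap H_u$ of positive measure, strictly monotone in $P$-mass as $u$ varies; there is no flat stretch of constant mass as there was in Example~\ref{example: sets}. Concretely, moving $x_0$ an infinitesimal amount towards $\mu$ either strictly increases the $P$-mass of each halfspace that previously realised the infimum, or forces a switch to a different, strictly heavier halfspace; in either case the new infimum is $>\alpha$.

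To make the "strictly monotone in $u$" step rigorous I would: (i) parametrise the boundary hemispheres through $x_0$ by their normal $u$ in the $(d-1)$-sphere of directions orthogonal to... no — by all $u\in\Sph$ with $\langle x_0,u\rangle=0$ when $x_0$ is the point being moved, passing instead to halfspaces $H_{0,u}$ with $x_0$ strictly interior after the move; (ii) use smoothness \eqref{smoothness of P} so that $u\mapsto P(H_{0,u})$ is continuous and the infimum over the compact constraint set is attained at some $u^*$; (iii) show that, along the great-circle move $x_t$ from $x_0$ towards $\mu$, the constraint set of admissible $u$ shrinks continuously and the minimising value $g(t)=\inf\{P(H_{0,u}):\langle x_t,u\rangle\ge 0\}$ satisfies $g(0)=\alpha$ and is strictly increasing at $0$ — the derivative being controlled from below by the "gap" $P(B)-P(-B)>0$ for the relevant lens-shaped region $B$ swept out. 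The compactness/continuity parts are routine; the genuinely delicate point, and the main obstacle I anticipate, is item (iii): ruling out that $g$ is merely non-decreasing and locally constant at $0$, i.e. converting the strict pointwise inequality $P(B)>P(-B)$ into a strict one-sided increase of an infimum over a continuum of halfspaces. I expect to handle this by a compactness argument on the set of near-minimisers $\{u : P(H_{0,u})<\alpha+\varepsilon,\ \langle x_0,u\rangle=0\}$, showing each such $u$ acquires $P$-mass $\ge \alpha + c\cdot t$ for a uniform $c>0$ depending on the dominant-hemisphere gap, so that $g(t)\ge \alpha + c' t$ for small $t>0$. Finally, the reverse inclusion $\cl{\{\AHD>\alpha\}}\subseteq \AHD_\alpha(P)$ is immediate from upper semi-continuity (Theorem~\ref{theorem:angular depth continuity}), so \eqref{eq: strict monotonicity} follows.
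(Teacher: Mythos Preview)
Your high-level strategy—move $x_0$ toward the median and use the dominant hemisphere to force a \emph{strict} increase of the depth—is in the right spirit, but the execution has a real gap precisely where you flag it. Two concrete problems. First, the constraint set $\{u\in\Sph:\langle x_t,u\rangle\ge 0\}$ does not ``shrink'' as $t$ grows; it rotates, so new halfspaces become admissible while others drop out. The weak monotonicity $g(t)\ge g(0)$ comes from quasi-concavity~\ref{cond: spherical convexity}, not from a nested feasible set. Second, the sentence ``each such $u$ acquires $P$-mass $\ge \alpha+c\,t$'' is not well-posed: for fixed $u$ the mass $P(H_{0,u})$ is constant in $t$. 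What you need is that every \emph{admissible} halfspace at time $t$ that is close to a time-$0$ minimiser has mass $\ge\alpha+c\,t$; but the hypothesis $P(B)>P(-B)$ is purely qualitative, with no uniform lower bound on the gap, so your compactness sketch does not produce the uniform constant $c>0$ you want.

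The paper avoids the perturbation argument entirely by extracting a cleaner consequence of the dominant-hemisphere condition that you are missing: for any $H,H'\in\half_0$, the sets $H'\setminus H$ and $H\setminus H'$ are antipodal up to their (null-mass, by~\eqref{smoothness of P}) boundaries. Hence if $H\cap H_u\subsetneq H'\cap H_u$ with positive-measure difference, then $H'\setminus H\subseteq H_u$ up to a null set, and applying the hypothesis to $B=H'\setminus H$ gives directly
\[
P(H')=P(H\cap H')+P(H'\setminus H)>P(H\cap H')+P(H\setminus H')=P(H).
\]
This strict monotonicity of halfspace masses with respect to their trace on the dominant hemisphere is exactly the structural ingredient that drives the Euclidean strict-monotonicity proof for \tHD{} (parts~(iii) and~(v) of Theorem~9 in \citet{Laketa_Nagy2022}); the paper then transports that argument verbatim, after first noting (via smoothness, as in Lemma~10 of the same reference) that every region $\AHD_\alpha(P)$ has non-empty spherical interior. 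No limiting or uniform-gap argument on the sphere is needed.
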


Another way of obtaining the strict monotonicity condition~\eqref{eq: strict monotonicity} is to use directly Theorem~\ref{theorem: relation} and assume that $P$ is supported in a hemisphere $H$. Then, we can assume that after gnomonic projection to the tangent hyperplane at the pole of $H$, the (non-negative) measure $P_\pm$ in $\R^{d-1}$ satisfies smoothness and contiguity of its support as assumed in \citet[Section~4.3]{Laketa_Nagy2022}.

\subsection{Large sample properties}    \label{section: large sample}

As our final result, we consider the problem of estimating \tAHD{} from a random sample. We consider a sequence $X_1, \dots, X_n \in \Sph$ of independent random variables defined on $\Omega$ sampled from the distribution $P \in \Prob$. We attach to each $X_i \equiv X_i(\omega)$ mass $1/n$ and denote the resulting (random) empirical measure by $P_n(\omega) \in \Prob[\Sph]$. When computing the depth, we typically estimate the depth of the (unknown) distribution $P$ by means of the sample depth, based on plugging the empirical measure $P_n(\omega)$ into the depth~\eqref{eq: angular halfspace depth} instead of $P$. The depth $\AHD(x; P_n(\omega))$ is called the sample angular halfspace depth of $x$. In the following result, we prove that the sample \tAHD{} almost surely uniformly approximates its population counterpart $\AHD(\cdot; P)$ as $n \to \infty$, and the same is true for the derived quantities $\median(P_n(\omega))$ and $\AHD_\alpha(P_n(\omega))$. 

\begin{theorem} \label{theorem:12}
Suppose that $X_1, X_2, \dots,$ is a sequence of independent random variables with distribution $P \in \Prob[\Sph]$, and let $P_n(\omega) \in \Prob[\Sph]$ be the (random) empirical measure corresponding to $X_1(\omega), \dots, X_n(\omega)$.
    \begin{enumerate}[label=(\roman*), ref=(\roman*)]
    \item \label{theorem sample i} Then we have
        \[  \PP\left( \left\{ \omega \in \Omega \colon \lim_{n \to \infty} \sup_{x \in \Sph} \left\vert \AHD(x; P_n(\omega)) - \AHD(x; P) \right\vert = 0 \right\} \right) = 1. \]
    \item If $P \in \Prob[\Sph]$ is such that \tAHD{} is strictly monotone for $P$, then for any closed interval $A \subset (\min_{x \in \Sph} \AHD(x; P), \max_{x \in \Sph} \AHD(x; P))$ we can write
        \[
        \PP\left( \left\{ \omega \in \Omega \colon \lim_{n \to \infty} \sup_{\alpha \in A} \Hauss\left( \AHD_\alpha(P_n(\omega)), \AHD_\alpha(P) \right) = 0 \right\} \right) = 1.  
        \]
    \item If $P \in \Prob[\Sph]$ satisfies the conditions of part~\ref{theorem v} of Theorem~\ref{theorem:angular depth median continuity}, then
        \[
        \PP\left( \left\{ \omega \in \Omega \colon \lim_{n \to \infty} \Hauss\left( \median(P_n(\omega)), \median(P) \right) = 0 \right\} \right) = 1.  
        \] 
    \end{enumerate}
\end{theorem}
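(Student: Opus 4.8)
The plan is to obtain all three parts as essentially immediate corollaries of the deterministic continuity results established earlier (Theorems~\ref{theorem: uniform continuity}, \ref{theorem: continuity of central regions}, and part~\ref{theorem v} of Theorem~\ref{theorem:angular depth median continuity}), combined with two standard probabilistic inputs: (a) the almost sure weak convergence $P_n(\omega) \xrightarrow{w} P$ as $n \to \infty$, which is the Glivenko--Cantelli / Varadarajan theorem for empirical measures of i.i.d.\ samples, valid for any $P \in \Prob[\Sph]$ since $\Sph$ is a separable metric space; and (b) the fact that for \emph{any} $P \in \Prob[\Sph]$ the empirical measure $P_n(\omega)$ is almost surely smooth in the sense of~\eqref{smoothness of P}, because $P_n(\omega)$ is purely atomic with at most $n$ atoms, and we only need that no hyperplane through the origin carries positive $P_n(\omega)$-mass on its boundary --- which fails only on the event that two or more sample points, together with the origin, are affinely dependent in a bad way; more carefully, one argues that the set of $(d-1)$-tuples of points of $\Sph$ lying on a common great subsphere has $P^{\otimes(d-1)}$-measure zero whenever $P$ itself is smooth, and in the non-smooth case one first reduces to the smooth case by a rotation as in the footnote around~\eqref{eq: zero equator}. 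Actually, the cleanest route avoids discussing smoothness of $P_n(\omega)$ altogether and instead uses that $P$ need not be smooth for part~\ref{theorem sample i}: I will isolate precisely what is needed below.

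\textbf{Part~\ref{theorem sample i}.} Here I want to prove that $\sup_{x}|\AHD(x;P_n(\omega)) - \AHD(x;P)| \to 0$ almost surely, \emph{without} assuming $P$ smooth. Fix $\omega$ in the probability-one event on which $P_n(\omega) \xrightarrow{w} P$. If $P$ happens to be smooth, Theorem~\ref{theorem: uniform continuity} applies directly and gives the conclusion. If $P$ is not smooth, I would pass to the reduction from the footnote near~\eqref{eq: zero equator}: but this does not immediately help since smoothness of $P$ was genuinely used in Theorem~\ref{theorem: uniform continuity} (the example right after it shows it cannot simply be dropped). The resolution is that the offending example has $P_n$ converging to a \emph{non-empirical} limit; here the limit is the same type of object. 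I expect the intended argument is different: one shows directly that $\AHD(\cdot; P_n(\omega)) \to \AHD(\cdot; P)$ pointwise and monotonically enough, or one invokes a uniform law of large numbers over the Vapnik--Chervonenkis class $\half_0$ of halfspaces through the origin. Indeed $\half_0$ is a VC class (its VC dimension is at most $d$), so by the VC uniform convergence theorem $\sup_{H \in \half_0}|P_n(\omega)(H) - P(H)| \to 0$ almost surely; since $\AHD(x;\cdot)$ is an infimum of the maps $Q \mapsto Q(H)$ over $\{H \in \half_0 : x \in H\}$, we get $|\AHD(x;P_n(\omega)) - \AHD(x;P)| \le \sup_{H \in \half_0}|P_n(\omega)(H) - P(H)|$ for every $x$ simultaneously, which is exactly the claimed uniform bound. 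This is the argument I would write, and it needs no smoothness of $P$ at all.

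\textbf{Parts~(ii) and~(iii).} These are now routine. For part~(ii): on the probability-one event from part~\ref{theorem sample i} the hypothesis~\eqref{eq: convergence condition} of Theorem~\ref{theorem: continuity of central regions} holds with $P_n = P_n(\omega)$, and by assumption \tAHD{} is strictly monotone for $P$; applying that theorem on the fixed closed interval $A$ yields $\sup_{\alpha \in A}\Hauss(\AHD_\alpha(P_n(\omega)), \AHD_\alpha(P)) \to 0$, which is the claim. For part~(iii): the hypotheses of part~\ref{theorem v} of Theorem~\ref{theorem:angular depth median continuity} include that $P$ is smooth and $\median(P) = \{x\}$ is a singleton; on the probability-one event where $P_n(\omega) \xrightarrow{w} P$, that theorem gives that every sequence of medians $x_n \in \median(P_n(\omega))$ has a subsequence converging to $x$, and since the limit is the single point $x$ the full convergence $\median(P_n(\omega)) \to \{x\}$ in the Hausdorff metric follows (any cluster point of $\median(P_n(\omega))$ lies in $\median(P) = \{x\}$ by outer semi-continuity, part~\ref{theorem ii}, and the sets are nonempty compact, so $\Hauss(\median(P_n(\omega)), \{x\}) \to 0$). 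In each case the only subtlety is the bookkeeping of intersecting the three (or two) probability-one events, which causes no difficulty.

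\textbf{Main obstacle.} The one genuinely non-formal point is part~\ref{theorem sample i} \emph{without} a smoothness assumption on $P$: one cannot simply quote Theorem~\ref{theorem: uniform continuity}. I expect to handle this via the VC / Glivenko--Cantelli argument for the halfspace class $\half_0$ sketched above, and the bulk of the work is checking that $\half_0$ is a VC class with finite VC dimension (standard, e.g.\ $\dim \le d+1$) and assembling the elementary inequality $|\AHD(x;Q_1) - \AHD(x;Q_2)| \le \sup_{H \in \half_0}|Q_1(H) - Q_2(H)|$, which follows because an infimum of uniformly-close families of numbers is uniformly close.
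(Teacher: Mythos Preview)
Your proposal is correct and matches the paper's proof essentially line for line: part~\ref{theorem sample i} via the bound $\sup_x|\AHD(x;P_n)-\AHD(x;P)|\le \sup_{H\in\half_0}|P_n(H)-P(H)|$ together with the Glivenko--Cantelli (VC) property of $\half_0$, part~(ii) by feeding part~\ref{theorem sample i} into Theorem~\ref{theorem: continuity of central regions}, and part~(iii) via Varadarajan's theorem plus part~\ref{theorem v} of Theorem~\ref{theorem:angular depth median continuity}. Your initial detour about smoothness of $P_n(\omega)$ was indeed a red herring (as you recognized); the VC argument bypasses it entirely and is exactly what the paper does.
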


\appendix 

\section{Proofs of the theoretical results}

\subsection{Proof of Theorem~\ref{theorem: relation}}

In the definition of \tAHD{} in~\eqref{eq: angular halfspace depth} we only consider halfspaces in $\R^d$ whose boundary passes through the origin. Denote by $G_u$ the intersection of $\Gplus$ and $H_{0,u} \in \half_0$. When considered as a subset of the affine space $\Gplus$, the set $G_u$ is a generalized halfspace: \begin{enumerate*}[label=(\roman*)] \item it is a halfspace in $\Gplus$ if $u \in \Sph\setminus\left\{ e_d, -e_d \right\}$, \item an empty set if $u = -e_d$, and \item equals $\Gplus$ if $u = e_d$.\end{enumerate*} By the definition~\eqref{eq: gnomonic projection} of the gnomonic projection $\xi$, a point $y \in \Sph\setminus\Sph_0$ lies in $H_{0,u}$ if and only if $\ang{y, u} = \ang{\xi(y), u} \ang{y, e_d} \geq 0$. If $y \in \Sph_+$, this is equivalent with $\xi(y) \in G_{u} = H_{0,u} \cap \Gplus$; for $y \in \Sph_-$ we have that $y \in H_{0,u}$ if and only if $\ang{\xi(y), u} \leq 0$, that is $\xi(y) \in G_{-u}$. In terms of $P_{\pm}$ we can thus express the $P$-mass of any $H_{0,u} \in \half_0$ as 
    \begin{equation}    \label{eq: pm halfspace}
    P\left(H_{0,u}\right) = P_+\left( G_u \right) + P_-\left( G_{-u} \right).    
    \end{equation}		
Writing $G_u^\circ$ and $\partial G_u$ for the relative interior and the relative boundary of the $(d-1)$-dimensional generalized halfspace $G_u$ in $\Gplus$, for any $u \in \Sph$ we have $P\left( \Sph_- \right) = P_-\left(\Gplus\right) = P_-\left( G_{-u}^\circ \right) + P_-\left( \partial G_{-u} \right) + P_-\left( G_{u}^\circ \right) = P_-\left( G_{-u} \right) + P_-\left( G_{u}^\circ \right)$, simply because the sets $G_{-u}$ and $G_u^\circ$ are disjoint and decompose $\Gplus$. Thus, we can rewrite~\eqref{eq: pm halfspace} to
    \begin{equation}    \label{eq: pm}	
    P\left(H_{0,u}\right) = P\left(\Sph_-\right) + P_+\left( G_u \right) - P_-\left( G_{u}^\circ \right).	
    \end{equation}
This holds true for any normal vector $u \in \Sph$, including $u = e_d$ or $-e_d$. 

Since we assumed in~\eqref{eq: zero equator} that $x \in \Sph_+$, we know that $x \in H_{0,u}$ if and only if $\xi(x) \in G_u$. Plugging~\eqref{eq: pm} into \eqref{eq: angular halfspace depth} we obtain
    \[
    \begin{aligned}
    \AHD(x;P) & = P\left(\Sph_-\right) + \inf \left\{ P_+\left( G_u \right) - P_-\left( G_{u}^\circ \right) \colon u \in \Sph \mbox{ and } x \in H_{0,u}\right\} \\
    & = P\left(\Sph_-\right) + \inf \left\{ P_+\left( G_u \right) - P_-\left( G_{u}^\circ \right) \colon u \in \Sph \mbox{ and } \xi(x) \in G_u \right\} \\
    & = P\left(\Sph_-\right) + \inf \left\{ P_+\left( H \right) - P_-\left( H^\circ \right) \colon H \in \genhalf \mbox{ and } \xi(x) \in H \right\}
    \end{aligned}
    \]
where in the last expression, it makes no difference whether the generalized halfspaces $\genhalf$ are considered in $\R^d$, or $\Gplus$, or $\R^{d-1}$ when identified with $\Gplus$. Considering $\genhalf$ in $\R^{d-1}$, we have proved the first part of our theorem. 

Suppose now that $P_-$ satisfies the continuity condition~\eqref{eq: negative smoothness}. Then, $P_-(\partial H) = 0$ for all $H \in \genhalf$ in $\R^{d-1}$ and thus $P_-(H^\circ) = P_-\left( H \right)$, which allows us to write $P_+\left( H \right) - P_-\left( H^\circ \right) = P_+(H) - P_-(H) = P_\pm(H)$ in~\eqref{eq: general AHD formula}.

\subsection{Proof of Theorem~\ref{theorem:angular depth continuity}}

We first use the portmanteau theorem \citep[Theorem~11.1.1]{Dudley2002} to show that the function
    \begin{equation}	\label{function f}
    f \colon \Prob[\Sph] \times \Sph \to [0,1] \colon \left(P,u\right) \mapsto P\left(H_{0,u}\right)
    \end{equation}
is upper semi-continuous at any measure $P\in\Prob[\Sph]$, and continuous at any $P$ that satisfies \eqref{smoothness of P}. To see that, fix $P \in \Prob[\Sph]$ and take $u_n \to u$ in $\Sph$. Since $u_n$ converges to $u$, it is possible to find a sequence of orthogonal matrices $\left\{ O_n \right\}_{n = 1}^\infty \subset \R^{d \times d}$ such that $O_n u_n = u$ for each $n = 1, 2, \dots$, and $O_n$ converges to the identity matrix $I_d \in \R^{d \times d}$. Define a measure $Q_n \in \Prob$ as the distribution of $O_n X_n$ with $X_n \sim P_n$. By Slutsky's theorem \citep[Theorem~2.13]{Jiang2010}, we then have that $Q_n \xrightarrow{w} P$ as $n \to \infty$. At the same time, for each $n = 1, 2, \dots$ we can write 
    \[ 
    O_n H_{0,u_n} = \left\{ O_n y \colon \ang{y, u_n} \geq 0 \right\} = \left\{ z \colon \ang{O_n\tr z, u_n} \geq 0 \right\} = H_{0, O_n u_n},
    \]
where we used $O_n^{-1} = O_n\tr$ as follows from the orthogonality of $O_n$ and therefore, we have
    \begin{equation}    \label{eq: equal halfspaces}   P_n(H_{0,u_n}) = \PP\left( X_n \in H_{0,u_n}\right) = \PP\left( O_n X_n \in O_n H_{0,u_n}\right) = Q_n(H_{0,u}). 
    \end{equation}
Thus, we have a fixed closed halfspace $H_{0,u}$ and a sequence of measures $\left\{Q_n\right\}_{n=1}^\infty \subset \Prob$ such that $Q_n \xrightarrow{w} P$ as $n \to \infty$. The portmanteau theorem \citep[Theorem~11.1.1]{Dudley2002} and~\eqref{eq: equal halfspaces} then give that
    \[  {\lim\sup}_{n \to \infty} f(P_n,u_n) = {\lim\sup}_{n \to \infty} P_n(H_{0,u_n}) = {\lim\sup}_{n \to \infty} Q_n(H_{0,u}) \leq P(H_{0,u}) = f(P,u) \]
as we wanted to show. In the special case when also~\eqref{smoothness of P} is valid, we obtain that $H_{0,u}$ is a continuity set of $P$, and the portmanteau theorem gives even 
    \[  {\lim}_{n \to \infty} f(P_n,u_n) = {\lim}_{n \to \infty} P_n(H_{0,u_n}) = {\lim}_{n \to \infty} Q_n(H_{0,u}) = P(H_{0,u}) = f(P,u). \]

Now for $x \in \Sph$ denote 
    \begin{equation}	\label{set v}
    v(x) = \left\{ u \in \Sph \colon x \in H_{0,u} \right\} = \left\{ u \in \Sph \colon \left\langle x, u \right\rangle \geq 0 \right\} = \Sph \cap H_{0,x}	
    \end{equation}
the set of all inner normals of halfspaces taken in the infimum in the definition~\eqref{eq: angular halfspace depth} of $\AHD(x; P)$. The set-valued mapping $v(x)$ is continuous in $x$ in the sense of Painlev\'e-Kuratowski convergence \citep[Chapter~5.B]{Rockafellar_Wets1998}, and \tAHD{} can be written as 
    \[	\AHD\left(x;P\right) = \inf_{u \in v(x)} f(P,u).	\]
We have verified all the assumptions of Berge's maximum theorem \citep[pp.~115--117]{Berge1997} on parametric optimization, which asserts that the depth function $\AHD \colon \Sph \times \Prob \to [0,1] \colon (x, P) \mapsto \AHD(x; P)$ is upper semi-continuous in both arguments for general $P$, and continuous in both arguments at any $P$ satisfying \eqref{smoothness of P}. 

\subsection{Proof of Theorem~\ref{theorem: quasi-concavity}}

Suppose first that $x \in \AHD_\alpha(P)$, meaning that $\AHD\left(x; P\right) \geq \alpha$. If $x$ does not belong to the right-hand side of \eqref{eq: level set}, then there must exist an open halfspace $G$ whose boundary passes through the origin, $P(G\compl) < \alpha$, and $x \notin G$. Take $H = G \compl \in \half_0$. Now, we have $x\in H \in \half_0$ and $P(H)<\alpha$, which contradicts $\AHD\left(x; P\right) \geq \alpha$. Necessarily, $\AHD_\alpha(P) \subseteq \bigcap \left\{ G \colon G\compl\in \half_0 \mbox{ and }P(G\compl)<\alpha \right\}$.

For the other direction, suppose that $x \notin \AHD_\alpha(P)$. Then we have $\alpha > \AHD(x;P) = \inf_{H \in \half_0 \colon x \in H} P(H)$, meaning that there exists a closed halfspace $H \in \half_0$ such that $x\in H$ and $P(H)<\alpha$. Consider the open halfspace $G = H\compl$. Then $x\notin G$, $G\compl \in \half_0$ and $P(G\compl) = P(H) < \alpha$, meaning that $x$ does not belong to the right-hand side of \eqref{eq: level set}. This gives $\AHD_\alpha(P) \supseteq \bigcap \left\{ G \colon G\compl\in \half_0 \mbox{ and }P(G\compl)<\alpha \right\}$, and proves our claim.

\subsection{Proof of Theorem~\ref{theorem: intersection}}

Take any $x \in \Sph$ that is not in $\AHD_\alpha(P)$. Then $\AHD\left(x; P\right) < \alpha$ and therefore, there is a closed halfspace $H\in \half_0$ such that $x\in H$ and $P(H) < \alpha$. First, we show that we can also assume that $H$ satisfies the additional property $x \in H^\circ$. Indeed, if $x \in \partial H$ and $u \in \Sph$ is the inner normal of $H$, we choose $u_n = \cos(1/n)u + \sin(1/n)x$ for $n = 1, 2, \dots$. Since $\ang{x,u} = 0$ we have
    \[  \norm{ u_n }^2 = (\cos(1/n))^2 \ang{u, u} + (\sin(1/n))^2 \ang{x, x} = 1, \]
and thus $u_n \in \Sph$. In addition, 
    \[  \ang{x, u_n} = \cos(1/n) \ang{x, u} + \sin(1/n) \ang{x, x} = \sin(1/n) > 0,  \]
and for $H_n = H_{0,u_n}$ we can write $x \in H_n^\circ$ for all $n$. Halfspaces $H_n \in \half_0$ correspond to slightly tilted $H$ so that $x$ is moved to the interior of $H_n$. In the proof of Theorem~\ref{theorem:angular depth continuity}, we proved that the function~\eqref{function f} given by $u \mapsto P(H_{0,u})$ is upper semi-continuous. Since $u_n \to u$, we thus have
    \[  \limsup_{n\rightarrow \infty} P(H_n) \leq P(H) < \alpha. \]
Necessarily, there must exist $n = 1, 2, \dots$ such that $P(H_n)<\alpha$. For such $n$ we have $x \in H_n^\circ$ as we wanted to show. 

Take now this $H\in \half_0$ with the properties $x\in H^\circ$ and $P(H) < \alpha$, and denote by $\tilde{H}=(H^\circ)\compl$ the complementary closed halfspace. Then $x \notin \tilde{H}$, but $\tilde{H} \in \half_0$ and $P(\tilde{H}\compl)=P(H^\circ) \leq P(H) <\alpha$. Thus, $x$ does not belong to the left-hand side of~\eqref{eq: depth region as intersection of closed halfspaces}.

\subsection{Example to Theorems~\ref{theorem: quasi-concavity} and~\ref{theorem: intersection}} \label{app: example}

%
It is convenient to define $P \in \Prob[{\Sph[2]}]$ by means of its gnomonic projection $P_\pm \in \MeasS[\R^2]$ from~\eqref{eq: gnomonic measure}. We then project $P_\pm$ to $\Sph[2]$ to obtain $P$, and use formula~\eqref{eq: general AHD formula} to evaluate \tAHD{}. For that we simplify the notation and write for $y \in \R^2$
    \begin{equation*}	
    \SHD\left(y;P_{\pm}\right) = \inf \left\{ P_{\pm}\left(H \right) + P_-\left(\partial H\right) \colon H \in \mathcal H^* \mbox{ and } y \in H \right\},	
    \end{equation*}
giving by~\eqref{eq: general AHD formula} that $\AHD(x;P) = P\left(\Sph_-\right) + \SHD\left(\xi(x);P_{\pm}\right)$.

We take the signed measure $P_\pm \in \MeasS[\R^2]$ with three atoms, of $P_\pm$-mass $1/5$ each, at points $y_1 = (-1,0)\tr$, $y_2 = (0,0)\tr$ and $y_3 = (1,0)\tr$, and two atoms, of mass $-1/5$ each, at points $z_1 = (0,-1)\tr$ and $z_2 = (0,1)\tr$. The total $P_\pm$-mass of $\R^2$ is $1/5$; any closed halfspace $H \subset \R^2$ that contains both $z_1$ and $z_2$ must also contain $y_2$. Thus, no closed halfspace in $\R^2$ can have $P_\pm$-mass smaller than $-1/5$, and $\SHD(y; P_\pm) \geq -1/5$ for all $y \in \R^2$. A halfspace $H$ with $P_\pm(H) = -1/5$ and $y \in H$ that does not contain any atoms of $P_\pm$ in $\partial H$ can be found for any $y \notin A$, where $A$ is the closed line segment between $y_1$ and $y_3$. Thus, $\SHD(y; P_\pm) = -1/5$ for all $y \notin A$. For any $y \in A$, we can always find a halfspace $H$ with $P_\pm(H) = 0$ and $y \in H$. We have that $\SHD(y; P_\pm) = 0$ for $y \in A$. 

\begin{figure}[htpb]
\includegraphics[width=.45\textwidth]{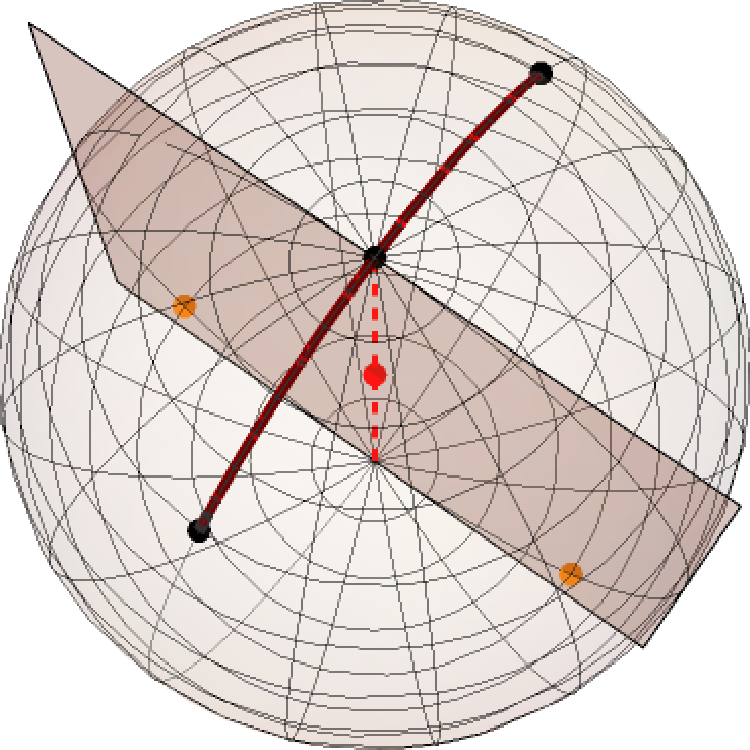} \quad \includegraphics[width=.45\textwidth]{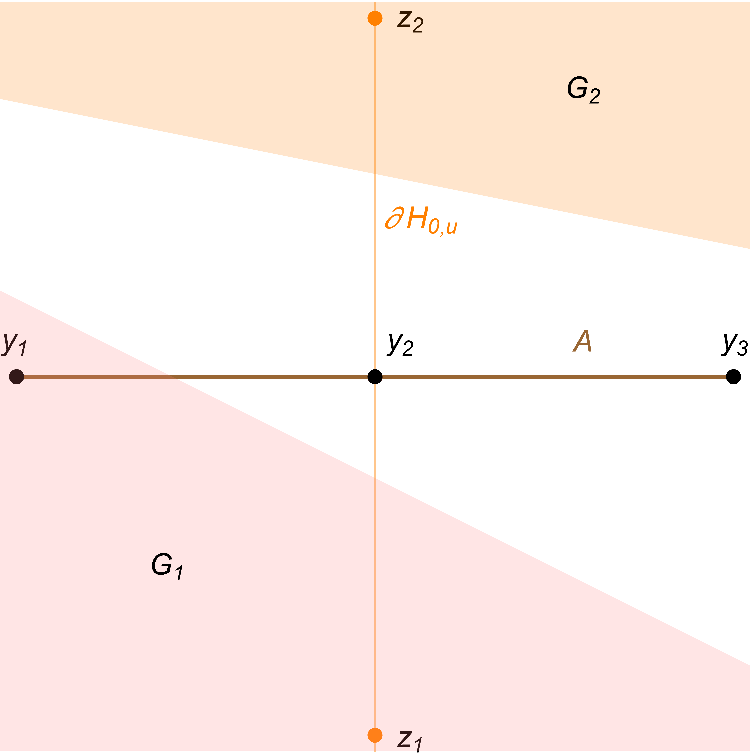}
\caption{The setup from Appendix~\ref{app: example}: The five points on the sphere $\Sph[2]$ (three black points in $\Sph[2]_+$ and two orange points in $\Sph[2]_-$; the origin is displayed in red. The depth \tAHD{} of the corresponding measure is equal to $1/5$ for all $x \notin B$ and $2/5$ for all $x \in B$, where $B$ is the arc displayed in brown. The plane on the left-hand side passing through the origin separates $\R^3$ into two open halfspaces, each containing only a single black point. In the right-hand panel, we see the same setup in the gnomonic projection in the plane $\R^2$. The pink halfplane $G_1$ contains $P_\pm$-mass $0$, while the orange one $G_2$ contains $P_\pm$-mass $-1/5$.}
\end{figure}

We now use the inverse gnomonic projection and formula~\eqref{eq: general AHD formula} to transfer our setup from $\R^2$ to $\Sph[2]$. The points $y_1$, $y_2$, $y_3$ map to $x_1 = (-1/\sqrt{2}, 0, 1/\sqrt{2})\tr$, $x_2 = (0,0,1)\tr$, $x_3 = (1/\sqrt{2}, 0, 1/\sqrt{2})\tr$ in $\Sph[2]_+$, while $z_1$ and $z_2$ map to $x_4 = (0, -1/\sqrt{2}, -1/\sqrt{2})\tr$ and $(0, 1/\sqrt{2}, -1/\sqrt{2})\tr$, respectively. Our atomic measure $P \in \Prob[{\Sph[2]}]$ has five atoms $x_i$, $i = 1, \dots, 5$, each of $P$-mass $1/5$. Using Theorem~\ref{theorem: relation} we get that $\AHD(x;P) = 1/5$ for all $x \in \Sph[2] \setminus B$ and $\AHD(x; P) = 2/5$ for $x \in B$, where $B = \left\{ \cos(\alpha) x_1 + \sin(\alpha) x_3 \colon \alpha \in [0,\pi/2] \right\}$ is the shorter arc between $x_1$ and $x_3$. We obtain $\AHD_{2/5}(P)=B$. Take now a closed halfspace $H = H_{0,u} \in \half_0$ with $u = (1,0,0)\tr$. We have $P(H\compl) = P(\{x_1\}) = 1/5 < 2/5$, but $H \cap \AHD_{2/5}(P)$ is one half of the arc $B$. Thus, for $\alpha = 2/5$ and, say, $x_1 \in \AHD_{\alpha}(P)$ we have $H \in \half_0$ that satisfies $P(H\compl)<\alpha$, but $x_1 \notin H$. Therefore, the inequality in~\eqref{eq: depth region as intersection of closed halfspaces} is strict.

\subsection{Proof of Theorem~\ref{theorem: flag}}

The proof is a direct adaptation of the proof of \citet[Theorem~1]{Pokorny_etal2022}.    

\subsection{Proof of Theorem~\ref{theorem: minimizing}}

The first part of the statement follows directly from \citet[Theorem~1]{Pokorny_etal2022}. Indeed, the right-hand side of~\eqref{eq: minimum flag} is by that theorem precisely the (Euclidean) halfspace depth of the origin $0 \in \R^d$ w.r.t. the measure $P$ when considered in the ambient space $\R^d$. Theorem~1 in \citet{Pokorny_etal2022} then states that $\HD(0; P)$ can also be expressed in terms of our flag halfspaces~\eqref{eq: flag halfspace}, and a flag halfspace satisfying~\eqref{eq: minimum flag} always exists. 

For the second part of our claim, note that since by Theorem~\ref{theorem: flag} also \tAHD{} can be expressed using flag halfspaces, for every $x \in F$ from~\eqref{eq: minimum flag}, we must have $\AHD(x; P) = P(F)$, directly because of~\eqref{eq: minimum flag} and the definition~\eqref{eq: angular halfspace depth} of \tAHD{}.

\subsection{Proof of Theorem~\ref{theorem: maximality}}

For any orthogonal matrix $O \in \R^{d \times d}$ such that $O \mu = \mu$ we know, by the assumption of rotational symmetry of $P$, that $P_{OX} = P$. Thanks to the rotational invariance~\ref{cond: rotational} of \tAHD{} from Theorem~\ref{theorem: rotational} we get
    \begin{equation}    \label{eq: constancy}
    \AHD(x; P) = \AHD(Ox; P_{OX}) = \AHD(Ox; P). 
    \end{equation}
Thus, \tAHD{} of $x \in \Sph$ must be constant on spheres that are cut from $\Sph$ by hyperplanes orthogonal to $\mu$. In other words, $\AHD(x; P)$ depends only on $\ang{x, \mu}$. 

By Theorem~\ref{theorem: minimizing}, we know that (at least) one of the points $\mu, -\mu \in \Sph$ lies in the region of minimum \tAHD{} of $P$. Without loss of generality, let $-\mu$ be this point, meaning that
    \begin{equation}    \label{eq: minimum at -mu}
    \AHD(-\mu; P) = \min_{x \in \Sph} \AHD(x; P). 
    \end{equation}
We will show that $\AHD(\mu;P) = \max_{x \in \Sph} \AHD(x;P)$. Suppose for contradiction that there exists $y \in \Sph \setminus\left\{ \mu, -\mu \right\}$ such that 
    \begin{equation} \label{eq: contradiction}
    \alpha = \AHD(y;P) > \AHD(\mu;P) \geq \AHD(-\mu;P),
    \end{equation}
the second inequality following trivially from~\eqref{eq: minimum at -mu}. In that case, formula \eqref{eq: constancy} gives that $\AHD(x;P) = \alpha$ for all points $x$ on the set $S = \left\{ x \in \Sph \colon \ang{x, \mu} = \ang{y, \mu} \right\}$. Necessarily, $\AHD_\alpha(P) \supseteq S$ but $\mu, -\mu \notin \AHD_\alpha(P)$. By Theorem~\ref{theorem: quasi-concavity}, the set $\AHD_\alpha(P)$ must be spherically convex, meaning that the convex hull of $\rad{S}$ intersected with $\Sph$ is a subset of $\AHD_\alpha(P)$. Now, if $\ang{y, \mu} \ne 0$, the convex hull of $\rad{S}$ necessarily contains either $\mu$ or $-\mu$, which would contradict~\eqref{eq: contradiction}. The only remaining case is that $\ang{y, \mu} = 0$. That is, however, also impossible thanks to our Theorem~\ref{theorem: minimizing} that entails~\eqref{eq: min max}. Indeed, if for some $y \in S$ we had~\eqref{eq: contradiction}, then~\eqref{eq: min max} gives that $\AHD(-y;P) = \AHD(-\mu;P) < \alpha$, but at the same time $-y \in S$ gives $\AHD(-y;P) = \alpha$, a contradiction. We have found that one of the points $\mu$ or $-\mu$ must maximize the angular halfspace depth of $P$ as we wanted to show.

\subsection{Proof of Theorem~\ref{theorem:angular depth median continuity}}

The following auxiliary lemma will be useful. 

\begin{lemma}	\label{lemma:Donoho}
Let $U \subset \Sph$ be a set that is either open or finite in the sphere $\Sph$. The following are equivalent:
	\begin{enumerate}[label=(\Alph*), ref=(\Alph*)]
	\item \label{Donoho A} the origin lies in the convex hull of $U$;
	\item \label{Donoho B} the origin lies in the convex hull of $R = \rad{U}$;
	\item \label{Donoho C} $\bigcup_{r \in R} H_{0,r} = \Sph$;
	\item \label{Donoho D} $\bigcup_{u \in U} H_{0,u} = \Sph$.
	\end{enumerate}
\end{lemma}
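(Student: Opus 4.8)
The plan is to prove the chain of equivalences $\ref{Donoho A} \Leftrightarrow \ref{Donoho B}$, then $\ref{Donoho B} \Leftrightarrow \ref{Donoho C}$, and finally $\ref{Donoho C} \Leftrightarrow \ref{Donoho D}$, handling both the open and the finite cases together where possible, and separating them only where the distinction matters.

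The equivalence $\ref{Donoho A} \Leftrightarrow \ref{Donoho B}$ is essentially immediate: the convex hull of $R = \rad{U}$ is the cone generated by $\mathrm{conv}(U)$, i.e.\ $\mathrm{conv}(R) = \{\lambda v \colon v \in \mathrm{conv}(U),\ \lambda > 0\}$, possibly together with $0$. So $0 \in \mathrm{conv}(R)$ iff $0$ is a nonnegative combination of points of $U$ summing (after normalization) appropriately; by Carath\'eodory and homogeneity this is the same as $0 \in \mathrm{conv}(U)$. I would just write out that $0 = \sum_i \lambda_i u_i$ with $\lambda_i > 0$, $u_i \in U$ can be rescaled so that $\sum_i \lambda_i = 1$, and conversely. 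For the equivalence $\ref{Donoho C} \Leftrightarrow \ref{Donoho D}$: clearly $U \subseteq R$ only up to positive scaling, but $H_{0,r} = H_{0,r/\norm r}$ since the halfspace $\{y \colon \ang{y,r} \geq 0\}$ depends only on the direction of $r$; hence $\bigcup_{r \in R} H_{0,r} = \bigcup_{u \in U} H_{0,u}$ as sets, and the two statements are literally the same. This reduces everything to proving $\ref{Donoho B} \Leftrightarrow \ref{Donoho C}$, a statement purely about the cone $R$.

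For $\ref{Donoho B} \Leftrightarrow \ref{Donoho C}$, the contrapositive is the cleanest route, and it is a standard separating-hyperplane argument. Suppose $0 \notin \mathrm{conv}(R)$. If $U$ (hence $R$) is finite, $\mathrm{conv}(R)$ is a closed convex set not containing $0$, so there is $w \in \Sph$ with $\ang{w, r} > 0$ for every $r \in R$; then $-w \in \Sph$ satisfies $\ang{-w, r} < 0$ for all $r$, so $-w \notin H_{0,r}$ for any $r \in R$, and $\bigcup_{r \in R} H_{0,r} \ne \Sph$. If $U$ is open, $R$ is an open cone, and I claim $0 \notin \mathrm{conv}(R)$ still forces $0 \notin \overline{\mathrm{conv}(R)}$: indeed $\mathrm{conv}(R)$ is an open convex cone (the convex hull of an open set is open), and an open convex set either contains $0$ or can be strictly separated from it, so again we get $w \in \Sph$ with $\ang{w,r} > 0$ for all $r \in R$ and conclude as before. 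Conversely, if $\bigcup_{r\in R} H_{0,r} \ne \Sph$, pick $z \in \Sph$ outside the union, so $\ang{z, r} < 0$ for all $r \in R$; then $-z$ strictly separates $0$ from $R$, hence from $\mathrm{conv}(R)$, so $0 \notin \mathrm{conv}(R)$.

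The main obstacle, and the only place real care is needed, is the subtlety in the open case: $\mathrm{conv}(R)$ need not be closed, so "$0 \notin \mathrm{conv}(R)$" must be upgraded to strict separability. The key observations that make it work are (i) the convex hull of an open set in $\R^d$ is open, and (ii) a nonempty open convex set $C \subseteq \R^d$ with $0 \notin C$ admits a hyperplane through $0$ with $C$ on one (open) side — this is the supporting/separating hyperplane theorem applied to the open convex set $C$ and the point $0 \notin C$, which gives a functional $w$ with $\ang{w, c} \geq 0$ on $\overline C$, and then $\ang{w, c} > 0$ on $C$ by openness. I would state these two facts explicitly (they are classical, e.g.\ from any convex analysis reference) and then the argument above goes through uniformly. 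A small additional point to note is that the union in \ref{Donoho C} and \ref{Donoho D} is over \emph{closed} halfspaces $H_{0,r}$, so the equator direction $z$ with $\ang{z, r} = 0$ for some $r$ would lie in the union; that is why the separation must be strict ($\ang{z, r} < 0$ for all $r$), which is exactly what strict separability delivers — so the logic is consistent.
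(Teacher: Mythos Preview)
Your approach is essentially the same as the paper's: reduce to \ref{Donoho B}~$\Leftrightarrow$~\ref{Donoho C} via the trivial equivalences \ref{Donoho A}~$\Leftrightarrow$~\ref{Donoho B} and \ref{Donoho C}~$\Leftrightarrow$~\ref{Donoho D}, and then argue both directions of \ref{Donoho B}~$\Leftrightarrow$~\ref{Donoho C} by separation. The open case and the converse direction are handled correctly and match the paper.

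There is, however, a slip in your treatment of the finite case. You write ``If $U$ (hence $R$) is finite, $\mathrm{conv}(R)$ is a closed convex set not containing $0$''. But $R = \rad{U}$ is a union of open rays and is \emph{never} finite (for $U \neq \emptyset$), and $\mathrm{conv}(R)$ is not closed in general: for instance, with $U = \{e_1, e_2\}$ in $\R^2$, $\mathrm{conv}(R)$ is the open first quadrant together with the two open half-axes, whose closure contains $0$. So you cannot invoke closedness of $\mathrm{conv}(R)$ to get strict separation. The fix is immediate and is what the paper does implicitly: use that \ref{Donoho A} and \ref{Donoho B} are already known to be equivalent, so $0 \notin \mathrm{conv}(R)$ gives $0 \notin \mathrm{conv}(U)$; since $U$ is finite, $\mathrm{conv}(U)$ is compact, and strict separation yields $w \in \Sph$ with $\ang{w,u} > 0$ for every $u \in U$, hence $\ang{w,r} > 0$ for every $r \in R$ by homogeneity. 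With this correction your argument is complete and coincides with the paper's.
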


\begin{proof}[Proof of Lemma~\ref{lemma:Donoho}]
Since $U \subset R$, the convex hull of $U$ is a subset of the convex hull of $R$, and \ref{Donoho A} implies \ref{Donoho B}. For the opposite implication, suppose that the origin lies in the convex hull of $R$. Carath\'eodory's theorem \citep[Theorem~1.1.4]{Schneider2014} gives that there exists $d+1$ (not necessarily distinct) points $r_1, \dots, r_{d+1} \in R$ such that 
	\begin{equation}	\label{Caratheodory}
	0 = \sum_{i=1}^{d+1} \lambda_i \, r_i \mbox{ for some $0 \leq \lambda_i$ with $\sum_{i=1}^{d+1} \lambda_i = 1$.}
	\end{equation}
Since $R = \rad{U}$, being a radial extension, never contains the origin $0 \in \R^d$, we have that $\sum_{i=1}^{d+1} \lambda_i \norm{ r_i } > 0$. From \eqref{Caratheodory} we see that
	\[	\sum_{i=1}^{d+1} \frac{\lambda_i \norm{ r_i }}{\sum_{j=1}^{d+1} \lambda_j \norm{ r_j }} \frac{r_i}{\norm{ r_i }} = 0,	\]
and setting $u_i = r_i/\norm{ r_i } \in U$ and $\gamma_i = \lambda_i \norm{ r_i }/\left(\sum_{j=1}^{d+1} \lambda_j \norm{ r_j } \right)$ yields that the origin is also contained in the convex hull of $U$. Therefore \ref{Donoho B} implies \ref{Donoho A}.

Statements \ref{Donoho C} and \ref{Donoho D} are clearly equivalent since $H_{0,r} = H_{0,u}$ for each $r \in R$ and $u = r/\norm{ r }$. 

Note that if $U$ is supposed to be open in the sphere, then the set $R$ from \ref{Donoho B} is open in $\R^d$. By \citet[Theorem~1.1.10]{Schneider2014}, we then know that the convex hull $C$ of $R$ is open in $\R^d$. Suppose first that \ref{Donoho B} is not true. The Hahn-Banach theorem \citep[Theorem~1.3.4]{Schneider2014} gives that there exists a vector $x \in \Sph$ such that $H_{0,x} \cap C = \emptyset$, or equivalently $\left\langle x, r \right\rangle < 0$ for all $r \in C$. The strict inequality follows either from the fact that $C$ is open (in the case of $U$ open), or from the finiteness of $U$. The inequality $\left\langle x, r \right\rangle < 0$ however translates into $x \notin H_{0,r}$ for all $r \in R$, which gives that also \ref{Donoho C} cannot be true. 

Suppose finally that \ref{Donoho C} is violated. Then there exists $x \in \Sph$ that is not contained in any $H_{0,r}$, or equivalently $\left\langle x, r \right\rangle < 0$ for all $r \in R$. If the origin was contained in the convex hull $C$ of $R$, \citet[Theorem~1.1.4]{Schneider2014} again gives that we could write \eqref{Caratheodory} for some $r_1, \dots, r_{d+1} \in R$. The inner product of \eqref{Caratheodory} with $x$ then gives $0 = \sum_{i=1}^{d+1} \lambda_i \, \left\langle x, r_i \right\rangle < 0$, a contradiction. Thus, \ref{Donoho B} implies \ref{Donoho C}, and the lemma is proved.
\end{proof}

\textbf{Part~\ref{theorem i}.} We begin by showing that the depth $\maxd(P)$ of an \tAHD{}-median cannot be less than $1/(d+1)$. Suppose for contradiction that for some $P \in \Prob[\Sph]$ we have $\maxd(P) < c < 1/(d+1)$ for $c > 0$. Then, for any $x \in \Sph$ there exists a halfspace $H_{0,u(x)} \in \half_0$ that contains $x$, and $P(H_{0,u(x)}) < c$. In other words, there is a covering of $\Sph$ by halfspaces whose $P$-probability is less than $c$. Consider the collection of all inner normals $U = \left\{ u(x) \right\}_{x \in \Sph}$ of such halfspaces. By the upper semi-continuity of function $f$ from \eqref{function f}, the lower level set $U$ of $f$ is open in $\Sph$ \citep[Theorem~1.6]{Rockafellar_Wets1998}. The auxiliary Lemma~\ref{lemma:Donoho} below then gives that the origin must be contained in the convex hull of $U$. Carath\'eodory's theorem \citep[Theorem~1.1.4]{Schneider2014}, in turn, gives that in that case, the origin must be contained in a convex hull of at most $d+1$ (not necessarily distinct) elements of the set $U$ denoted by $u_1, \dots, u_{d+1} \in \Sph$. Using once again Lemma~\ref{lemma:Donoho}, we have that these $d+1$ elements induce a covering of $\R^d$, that is $\bigcup_{i=1}^{d+1} H_{0,u_i} = \R^d$ with $P\left(H_{0,u_i}\right) < c$ for all $i = 1,\dots, d+1$. That means
    \[	1 = P\left(\R^d\right) = P\left( \bigcup_{i=1}^{d+1} H_{0,u_i} \right) \leq \sum_{i=1}^{d+1} P\left(H_{0,u_i}\right) \leq (d+1) c < 1,	\]
a contradiction.

For the upper semi-continuity of the maximum depth mapping $\maxd$, Theorem~\ref{theorem:angular depth continuity} gives that the function $\AHD\left(\cdot;\cdot\right)$ is upper semi-continuous in both arguments. Berge's maximum theorem \citep[Theorem~2 on p.~116]{Berge1997} then directly yields the conclusion.

\textbf{Part~\ref{theorem ii}.} Under the smoothness assumption \eqref{smoothness of P}, Theorem~\ref{theorem:angular depth continuity} gives that $\AHD\left(\cdot;\cdot\right)$ is continuous at $P$. \citet[Maximum theorem on p.~117]{Berge1997} then entails the continuity of $\maxd$, and the outer semi-continuity of $\median$.

Now, let $x \in \median(P)$. Take any hyperplane $G$ containing $x$ and the origin in $\R^d$, and consider the two halfspaces $H, H' \in \half_0$ determined by $G$. Because $P$ is smooth, we have $P(G)=0$ and consequently $P(H)+P(H')=1$, which gives $\maxd(P) = \AHD(x; P) \leq \min\left\{ P(H), 1 - P(H) \right\} \leq 1/2$.

\textbf{Part~\ref{theorem v}.} By Theorem~\ref{theorem:angular depth continuity} we know that $x_n \in \median(P_n)$ always exists. Since $\Sph$ is a compact set, there exists a sub-sequence $\left\{ x_{n(k)} \right\}_{k = 1}^\infty$ of that sequence of directional medians such that $x_{n(k)}$ converges in $\Sph$ as $k \to \infty$. Thanks to part~\ref{theorem ii} of this theorem, we know that $x_{n(k)}$ then must converge to $x$, the unique median of $P$. We just proved the inner semi-continuity of the median mapping~\eqref{median mapping}, which together with its outer semi-continuity from part~\ref{theorem ii} of this theorem gives the continuity of~\eqref{median mapping} in the sense of \citet[Definition~5.4]{Rockafellar_Wets1998}. For a sequence of closed sets $\left\{ K_n \right\}_{n=1}^\infty$ that are all contained in a bounded subset of $\R^d$, the convergence in the Hausdorff distance $\lim_{n\to\infty} \Hauss\left(K_n, K\right) = 0$ is equivalent with the convergence of sets induced by the notion of continuity of set-valued mappings discussed above \citep[Chapter 4C and p.~144]{Rockafellar_Wets1998}. 

\subsection{Proof of Example~\ref{ex:2}}    \label{app:ex2}

First note that since $0 = \sum_{i=1}^{d+1} e_i/(d+1)$, Lemma~\ref{lemma:Donoho} gives that the collection of halfspaces $H_{0,e_i}$, $i=1,\dots,d+1$, covers $\Sph$. In particular, for any $x \in \Sph$ there must exist $i=1,\dots,d+1$ such that $\ang{ x, e_i } \geq 0$, implying $e_i \in v(x)$ for $v(x) = \left\{ u \in \Sph \colon x \in H_{0,u} \right\} = \Sph \cap H_{0,x}$, see also \eqref{set v}. Thus, any halfspace $H_{0,u} \in \half_0$ with $u \in v(x)$ contains at least one atom of $P$, and $\AHD\left(x;P\right) \geq 1/(d+1)$. We now construct a halfspace in $\half_0$ that contains $x$ and exactly one atom of $P$. Since $x = \left(x_1, \dots, x_d\right)\tr \in \Sph$, two situations are possible: \begin{enumerate*}[label=(\roman*)] \item either all coordinates of $x$ are non-positive, or \item for some $j = 1,\dots,d$ the coordinate $x_j$ is strictly positive. \end{enumerate*} 

In the first case, we consider the halfspace $H_{0,e_{d+1}}$. Then, $\ang{ x, e_{d+1} } > 0$, i.e. $x \in H_{0,e_{d+1}}$, and at the same time $\ang{ e_i, e_{d+1} } = -1$ for any $i \ne d+1$, meaning that $e_i \notin H_{0,e_{d+1}}$. We get that $H_{0,e_{d+1}}$ contains $x$ and $e_{d+1}$, but no other atom of $P$. 

In the second case, let $j = 1, \dots, d$ be the first index with $x_j > 0$. Take $\varepsilon > 0$ small enough, and consider $u = e_j - \left(\varepsilon, \dots, \varepsilon\right)\tr$ and the halfspace $H_{0,u/\norm{ u }}$. A simple calculation gives
    \[
    \begin{aligned}
    \ang{ e_j, u } & = 1 - \varepsilon, &  
    \ang{ e_i, u } & = - \varepsilon, \mbox{ for $i=1,\dots,d$, $i\ne j$}, \\
    \ang{ x, u } & = x_j (1 - \varepsilon) - \varepsilon \sum_{i \ne j} x_i,  & \mbox{ and } \quad
    \sqrt{d} \ang{ e_{d+1}, u } & = \varepsilon (d-1) - (1-\varepsilon), \\
    \end{aligned}
    \]
meaning that for $\varepsilon > 0$ small enough, $x, e_j \in H_{0,u/\norm{ u }}$, but $e_i \notin H_{0,u/\norm{ u }}$ for all $i=1,\dots,d+1$, $i \ne j$. Thus, $H_{0,u/\norm{ u }}$ contains only $x$ and a single atom of $P$, meaning that $\AHD\left(x;P\right) = 1/(d+1)$.   

\subsection{Proof of Theorem~\ref{theorem: uniform continuity}}

We have
    \begin{equation}    \label{eq: halfspace bound}
    \begin{aligned}
    \sup_{x \in \Sph} & \left\vert \AHD(x; P_n) - \AHD(x; P) \right\vert = \sup_{x \in \Sph} \left\vert \inf_{H \in \half_0 \colon x \in H} P_n(H) - \inf_{H \in \half_0 \colon x \in H} P(H) \right\vert \\
    & \leq \sup_{x \in \Sph} \sup_{H \in \half_0 \colon x \in H} \left\vert P_n(H) - P(H) \right\vert \leq \sup_{H \in \half_0} \left\vert P_n(H) - P(H) \right\vert.
    \end{aligned}
    \end{equation}
The final expression vanishes as $n \to \infty$ by, e.g., \citet[Theorem~A.3]{Nagy_etal2016}.

\subsection{Proof of Theorem~\ref{theorem: continuity of central regions}}

We apply the proof of implication \textbf{(ComD)} $\Rightarrow$ \textbf{(ComR)} from \citet[Theorem~4.5]{Dyckerhoff2017}. In that paper, only depths in $\R^d$ are considered. Nevertheless, the fact that one works in $\R^d$ is used only in several arguments in the proofs, and many of the derivations work precisely in the same way for measures in $\Sph$. 

For starters, in \citet[page~4]{Dyckerhoff2017} it is assumed that the function $D$ in $\R^d$ must be a depth in the sense of Definition~2.1 in \citet{Dyckerhoff2017}. That requires that the upper level sets $D_\alpha(P) = \left\{ x \in \R^d \colon D(x;P) \geq \alpha \right\}$ are all \begin{enumerate*}[label=\textbf{(R\arabic*)}] \item affine equivariant (for $\alpha \geq 0$), \item bounded (for $\alpha > 0$), \item closed (for $\alpha > 0$), and \item star-shaped (for all $\alpha \geq 0$). \end{enumerate*} The requirement \textbf{(R1)} of affine equivariance is, however, not used anywhere in the proof of Theorem~4.5 in \citet{Dyckerhoff2017}. The (spherical counterparts of the) additional conditions \textbf{R2}--\textbf{R4} are all satisfied for \tAHD{}, because \begin{itemize*} \item $\AHD_\alpha(P) \subseteq \Sph$ for all $\alpha \geq 0$ and $\Sph$ is itself bounded, giving \textbf{R2}, \item each $\AHD_\alpha(P)$ is closed by Theorem~\ref{theorem:angular depth continuity}, which verifies \textbf{R3}, and \item (spherical) star-shapedness of $\AHD_\alpha(P)$ in \textbf{R4} follows immediately from the (spherical) convexity of $\AHD_\alpha(P)$ as proved in Theorem~\ref{theorem: quasi-concavity}. \end{itemize*} Thus, the advances proved in \citet{Dyckerhoff2017} can be used in our setup.

Mimicking the proof of Theorem~4.5 in \citet{Dyckerhoff2017} for a depth $D$ in $\R^d$, we see that the core of the proof actually lies in proving \begin{itemize*} \item the continuity of the function $\alpha \mapsto D_\alpha(P)$ in the Hausdorff metric~\eqref{eq: Haussdorf distance}, and \item the bound\end{itemize*}
    \begin{equation}    \label{eq: Theorem 4.4}
    D_{\beta + \gamma}(P) \subseteq D_\beta(P_n) \subseteq D_{\beta - \gamma}(P)   \quad \mbox{for all $n \geq N$ and $\beta \geq \alpha_1$},
    \end{equation}
where $\alpha_1$ is the lower endpoint of the interval $A \subset (0,\max_{x \in \R^d} D(x; P))$, $\gamma > 0$ is a small enough constant, and $N$ is a large enough integer. 

The continuity of the mapping $\alpha \mapsto D_\alpha(P)$ is obtained in \citet[Theorem~3.2]{Dyckerhoff2017}. There, it is asserted that strict monotonicity of the depth $D$ for $P \in \Prob[\R^d]$ at all $\alpha \in (0, \max_{x \in \R^d} D(x; P))$ is enough to have continuity of $\alpha \mapsto D_\alpha(P)$ in $\alpha \in (0,\max_{x \in \R^d} D(x; P)]$. The proof of Theorem~3.2 in \citet{Dyckerhoff2017} does not use the fact that we work in $\R^d$; it works equally well in $\Sph$. In our setup, however, we assume strict monotonicity in~\eqref{eq: strict monotonicity} only for $\alpha \in (\min_{x \in \Sph} \AHD(x; P),\max_{x \in \Sph} \AHD(x; P))$, which by the same argument gives continuity of $\alpha \mapsto \AHD_\alpha(P)$ in the Hausdorff distance~\eqref{eq: Haussdorf distance} for $\alpha \in (\min_{x \in \Sph} \AHD(x; P),\max_{x \in \Sph} \AHD(x; P)]$.

As the second ingredient, we need to prove~\eqref{eq: Theorem 4.4} for $D = \AHD$. That is implication $(ii) \Rightarrow (iii)$ in \citet[Theorem~4.4]{Dyckerhoff2017}. As before, the proof of this implication does not use the fact that the depth $D$ is defined in $\R^d$ and also works for $\AHD$ in $\Sph$. In fact, the proof can be simplified by dropping the argument of a bounded set $M$ since, in our setup, we could take $M = \Sph$ directly. In view of our restriction to $\alpha > \min_{x \in \Sph} \AHD(x; P)$ in proving continuity of $\AHD_\alpha(P)$, we obtain \eqref{eq: Theorem 4.4} for all $\beta \geq \alpha_1 > \min_{x \in \Sph} \AHD(x; P)$. 

Putting the two ingredients together, in the proof of implication \textbf{(ComD)} $\Rightarrow$ \textbf{(ComR)} from \citet[Theorem~4.5]{Dyckerhoff2017} we obtain~\eqref{eq: uniform continuity of regions} as needed.

Observe that in \citet{Dyckerhoff2017}, two additional conditions are assumed for the depth $D$ throughout the paper:
    \begin{itemize}
    \item The range condition \textbf{(RC)} from page 9 of \citet{Dyckerhoff2017}.
    \item The ``general assumption'' stating that for each $\alpha < \max_{x \in \Sph} \AHD(x; P)$ must the set $\AHD_\alpha(P)$ have non-empty interior. 
    \end{itemize}
None of these is necessary for the proof of~\eqref{eq: uniform continuity of regions}. 

\subsection{Proof of Theorem~\ref{thm:xx}}

The condition of existence of a dominant hemisphere immediately implies that $\Sph \setminus H_{u}$ is an open hemisphere of minimum depth as in Theorem~\ref{theorem: flag}. The smoothness of $P$ and the continuity of \tAHD{} in $x$ from Theorem~\ref{theorem:angular depth continuity} then assert that the whole closed hemisphere $\Sph \cap H_{-u}$ is of minimum \tAHD{}. To prove the strict monotonicity of \tAHD{}, we pick any point $x \in \Sph$ from the \tAHD{}-median set of $P$. Because of the smoothness of $P$, we can argue as in \citet[Lemma~10]{Laketa_Nagy2022} to show that $\AHD_\alpha(P)$ must have non-empty (spherical) interior for all $\alpha \leq \max_{y \in \Sph}\AHD(y;P)$. Further, the dominant hemisphere condition says that for any $H, H' \in \half_0$ such that $H \cap H_u \subset H' \cap H_u$ we have
    \[  P(H') = P(H' \cap H) + P(H' \setminus H) > P(H' \cap H) + P(H \setminus H') = P(H), \]
since $H' \cap H \subset H_u$. This allows us to use the arguments as in the linear case and \tHD{} in parts (iii) and (v) of the proof of \citet[Theorem~9]{Laketa_Nagy2022}, which together give~\eqref{eq: strict monotonicity}. 

\subsection{Proof of Theorem~\ref{theorem:12}}

Our first claim can be proved analogously as for Theorem~\ref{theorem: uniform continuity}. The only exception is that in the final step, the last expression in~\eqref{eq: halfspace bound} vanishes as $n \to \infty$ almost surely, because the set of closed halfspaces $\half$ and its subset $\half_0$ are both Glivenko-Cantelli classes of sets \citep[Section 2.4 and Problem~14 in Section~2.6]{VanDerVaart_Wellner1996}.

For the second claim, part~\ref{theorem sample i} gives that~\eqref{eq: convergence condition} is valid for $\PP$-almost all $\omega \in \Omega$. Thus, we can separately apply Theorem~\ref{theorem: continuity of central regions} for each $\omega \in \Omega$ and obtain the result.

The last part of our theorem follows by direct application of part~\ref{theorem v} of Theorem~\ref{theorem:angular depth median continuity} and the theorem of Varadarajan \citep[see, e.g.,][Theorem~11.4.1]{Dudley2002}.

\subsection*{Acknowledgment}
The work of Stanislav Nagy was supported by Czech Science Foundation (EXPRO project n. 19-28231X).
 

\def\polhk#1{\setbox0=\hbox{#1}{\ooalign{\hidewidth
  \lower1.5ex\hbox{`}\hidewidth\crcr\unhbox0}}}

\end{document}